\DeclareMathAlphabet{\mathup}{OT1}{\familydefault}{m}{n}
\let\epsilon\varepsilon
\let\phi\varphi
\let\hat\widehat
\let\tilde\widetilde
\let\bar\overline
\newcommand{\ZZ}{\mathbb{Z}}
\newcommand{\CC}{\mathbb{C}}
\newcommand{\AAA}{\mc{A}}
\newcommand{\mc}[1]{\mathcal{#1}}
\newcommand{\DD}{\mc{D}}
\newcommand{\K}{\mc{K}}
\newcommand{\MM}{\mc{M}}
\newcommand{\MMM}{\mathfrak{M}}
\newcommand{\witt}{\mathfrak{witt}}
\newcommand{\switt}{\mathfrak{switt}}
\newcommand{\vir}{\mathfrak{vir}}
\newcommand{\ns}{\mathfrak{ns}}
\newcommand{\OO}{\mc{O}}
\def\co{\colon\thinspace}
\def\coeq{\colonequals}
\newcommand{\dd}[1]{\frac{d}{d{#1}}}
\newcommand{\Gr}{{\mathup{Gr}}}
\DeclareMathOperator{\dGr}{\Gr_2 \times \Gr_1}
\DeclareMathOperator{\sdGr}{\Gr_{3/2} \times \Gr_{1/2}}
\newcommand{\HHom}{\mathbb{H}\mathup{om}}
\DeclareMathOperator{\sym}{sym_1}
\DeclareMathOperator{\Ber}{Ber}
\DeclareMathOperator{\tr}{tr}
\DeclareMathOperator{\str}{str}
\newcommand{\dett}{{\mathup{det}}}
\renewcommand{\det}{\dett}
\definecolor{mygreen}{RGB}{80,150,10}
\newcommand{\old}[1]{}
\DeclareRobustCommand*{\tuburl}{\hyper@normalise\tuburl@}
\def\tuburl@#1
\hyper@linkurl\expandafter{\expandafter\Hurl\expandafter{\l_tmpa_str}}{https://\l_tmpa_str}
\crefname{equation}{}{}
\definecolor{lightgray}{rgb}{0.9,0.9,0.9}
\let\tilde\widetilde
\newtheorem{theorem}{Theorem}[section]
\newtheorem{prop}[theorem]{Proposition}
\newtheorem{lemma}[theorem]{Lemma}
\theoremstyle{definition}
\newtheorem{definition}[theorem]{Definition}
\theoremstyle{remark}
\newtheorem{rem}[theorem]{Remark}
\title{Toward the universal Mumford form on Sato Grassmannians}
\author[K. A. Maxwell]{Katherine A. Maxwell}
\email[K. A. Maxwell]{katherine.maxwell@ipmu.jp}
\address{Kavli IPMU (WPI), UTIAS,
    The University of Tokyo,
    Kashiwa, Chiba 277-8583, Japan
}
\author[A. A. Voronov]{Alexander A. Voronov}
\email[A. A. Voronov]{voronov@umn.edu}
\address{School of Mathematics,
  University of Minnesota,
  Minneapolis, MN 55455,
  and\newline
 Kavli IPMU (WPI), UTIAS,
 The University of Tokyo,
 Kashiwa, Chiba 277-8583, Japan
 }
\thanks{Research is supported in part by World Premier International Research Center Initiative (WPI Initiative), MEXT, Japan. The first author is grateful to Max Planck Institute for Mathematics in Bonn for its hospitality and financial support.}
\dedicatory{Dedicated to the memory of Yuri Ivanovich Manin}
\begin{document}

\begin{abstract}
    We construct a local universal Mumford form on a product of Sato Grassmannians using the flow of the Virasoro algebra. The existence of this universal Mumford form furthers the proposal that the Sato Grassmannian provides a universal moduli space with applications to string theory. Our approach using the Virasoro flow is an alternative to using the KP flow, which in particular allows for a bosonic universal Mumford form to be constructed. Applying the same method, we construct a local universal super Mumford form on a product of super Sato Grassmannians using the flow of the Neveu-Schwarz algebra.
\end{abstract}

\maketitle

\tableofcontents

\section*{Introduction}
The Mumford form is a trivializing section
\begin{equation*}
\mu\co \OO_{\MM_g} \xrightarrow{\sim} \lambda_2 \otimes \lambda_1^{-13},
\end{equation*}
defined up to a constant factor, of the product of determinant line bundles over the moduli space $\MM_g$ of Riemann surfaces of genus $g$. This simple observation of Mumford \cite{Mumford.1977.sopv}, based on the Grothendieck-Riemann-Roch theorem, received considerable publicity after Belavin and Knizhnik \cite{Belavin.Knizhnik.1986.cgattoqs,Beilinson.Manin.1986.tMfatPmist} found out that the Polyakov measure $d π$ in string theory has a simple explicit relation to the Mumford form, which may be compressed to the slogan
\begin{equation*}
d π = \mu \wedge \overline{\mu},
\end{equation*}
see details in \cite{Witten.2019.nosRsatm}. These algebro-geometric or holomorphic methods in string theory opened new possibilities for perturbative computation of the partition function and scattering amplitudes, such as \cite{Beilinson.Manin.1986.tMfatPmist}. Similar results in superstring theory, potentially even more important for high-energy physics, ensued \cite{Voronov.1988.afftMmist,Rosly.Schwarz.Voronov.1989.sgast}.

After a relatively dormant period of some 15 years, D'Hoker and Phong made a breakthrough computation of the amplitudes for the supermoduli space $\MMM_2$ of genus-two super Riemann surfaces \cite{DHoker.Phong.2002.tls,DHoker.Phong.2008.lotls,Witten.2015.nohsastmolg} as well as proposed a computation-friendly expression for the super Mumford form on $\MMM_3$ \cite{DHoker.Phong.2005.amfatsm}, which resulted in partial computation of the amplitudes \cite{Gomez.Mafra.2013.tcs3laaSd}. D'Hoker-Phong's computations were based on ``splitting'' the supermoduli space $\MMM_2$ into the underlying moduli space $\MM_2$ and vector-bundle data on it, and then identifying the moduli space $\MM_2$ of Riemann surfaces with the moduli space $\AAA_2$ of principally polarized abelian varieties of dimension 2. Eyeing possible extension of D'Hoker and Phong's results to higher genera, Donagi and Witten \cite{Donagi.Witten.2015.ssinp} showed that the supermoduli space cannot be split. Moreover, since for higher $g$ the moduli space $\MM_g$ is described as a subspace of $\AAA_g$ via complicated equations (see Shiota's solution \cite{Shiota.1986.coJvitose} of the Schottky problem and also Farkas-Grushevsky-Salvati Manni \cite{Farkas.Grushevsky.SalvatiManni.2021.aesttwSp}), a direct generalization of D'Hoker-Phong's computations to higher genera seems to be out of reach, at least for the time being. There is a modular-form ansatz \cite{Grushevsky.2009.ssaihg} for the odd component of the super Mumford form in arbitrary genus, based on a certain splitting assumption for the super Mumford form and subject to verification of physical constraints, such as the vanishing of the cosmological constant.

The moduli space can also be embedded in the Sato Grassmannian, and one may attempt to use the coordinates on the Grassmannian to describe the Mumford form and perform computations. The Sato Grassmannian, despite being infinite dimensional, may offer a more manageable ambient space for the moduli space, especially given that Shiota's solution of the Schottky problem rested upon Mulase's work \cite{Mulase.1984.csiseaJv}, which characterized the moduli space locus in the Grassmannian via the KP flow. In the super case, a formula for the super Mumford form on a certain subspace of the super Sato Grassmannian was suggested by Schwarz \cite{Schwarz.1987.tfsaaums, Schwarz.1989.fsaums}, who proposed that subspace as the universal (super)moduli space, as it contains the supermoduli space locus. Schwarz's formula used the super KP flow on the super Grassmannian. He also noted that, in the bosonic case, a similar formula would not make sense, as it would be divergent.

The KP flow is not the only way to describe the moduli space locus in the Grassmannian. In the seminal paper \cite{Manin.1986.cdotstatdsotmsosc}, Manin conjectured that the moduli space is an orbit of the Virasoro algebra action on the Grassmannian. This conjecture was proven by Kontsevich \cite{Kontsevich.1987.VaaTs}, Beilinson and Schechtman \cite{Beilinson.Shechtman.1988.dbaVa}, Arbarello, De Concini, Kac, and Procesi \cite{Arbarello.DeConcini.Kac.Procesi.1988.msocart}, and Tsuchiya, Ueno, and Yamada \cite{Tsuchiya.Ueno.Yamada.1989.cftoufoscwgs}. In these papers, a flat holomorphic connection was constructed on the line bundle $\lambda_2 \otimes \lambda_1^{-13}$ over the moduli space $\MM_g$, and the Mumford form was characterized as a horizontal section of that holomorphic connection. Being such, it is defined locally up to a constant factor.

The goal of this paper is to extend the Mumford form, which is originally defined on the moduli space, to other Virasoro orbits on the Grassmannian, and provide a formula for the Mumford form in terms intrinsic to the Grassmannian and independent of the Riemann surface data. In a way, this means that we propose the Grassmannian, or to be more precise, the product $\Gr_2 \times \Gr_1$ of two Grassmannians, as the universal moduli space. This space is larger though simpler than Schwarz's universal moduli space, which is defined by certain nonlinear constraints as a subspace of a single Grassmannian. One may view our work as answering Schwarz's question on constructing a bosonic (non super) analogue of his universal super Mumford form. Our approach uses the Virasoro flow as opposed to the KP flow, and thereby the version of the universal Mumford form which we propose in not only the bosonic, but also fermionic (super) case is quite different from Schwarz's universal super Mumford form. In the super case, the Virasoro flow is actually replaced by the Neveu-Schwarz (NS) flow, based on the first author's work \cite{Maxwell.2022.tsMfaSG}, cf.\ the super KP flow, defined by Manin and Radul \cite{Manin.Radul.1985.aseotKPh} and described as a flow on the super Sato Grassmannian by Mulase \cite{Mulase.1991.ansKsaacotJoaasc} and Rabin \cite{Rabin.1991.tgotsKf}.

\subsection*{Conventions}
We work over the ground field $\CC$ of complex numbers: all super vector spaces, superschemes, etc.\ are assumed to be over $\CC$. By default, we assume our \emph{locally free sheaves} are of constant finite rank and also interchangeably call them \emph{vector bundles}. \emph{Invertible sheaves} or \emph{line bundles} are locally free sheaves of rank $1|0$ or $0|1$. We call them \emph{even} or \emph{odd line bundles}, respectively. We systematically write $=$ for canonical isomorphisms and $\mc{L}^{-1}$ for the dual $\mc{L}^*$ of a line bundle $\mc{L}$.

\section{The Mumford form and the Krichever map}

Understanding the relationship between the operator formulation and the geometrical formulation of (super)string theory was emphasized by Manin in \cite{Manin.1986.cdotstatdsotmsosc}. In this paper, he describes how the constant $6j^2-6j+1$ appears from both the representations of the Virasoro algebra and from the geometry of the moduli space of curves. The constant $6j^2-6j+1$, a multiple of the 2\textsuperscript{nd} Bernoulli polynomial, reduces to the critical dimension of bosonic string theory when $j=2$. Based on the appearance of the value $13$ from the purely algebraic properties of the Virasoro algebra, and the appearance of $13$ in the Mumford isomorphism, Manin suggests that the moduli space of curves plays the role of a homogeneous space with respect to the Virasoro algebra.

Inspired by Manin's paper \cite{Manin.1986.cdotstatdsotmsosc}, several authors simultaneously published papers explaining the story \cite{Arbarello.DeConcini.Kac.Procesi.1988.msocart,Kontsevich.1987.VaaTs,Beilinson.Shechtman.1988.dbaVa,Kawamoto.Namikawa.Tsuchiya.Yamada.1988.grocftoRs}. We briefly review here this story, as explained via the Sato Grassmannian. The analogous story for the case of super Riemann surfaces and the super Mumford form was addressed in \cite{Manin.1988.NSsadefMs, Maxwell.2022.tsMfaSG}.

\subsection{The moduli space of curves and the Witt algebra}

For a relationship to exist between the Virasoro algebra and the moduli space of curves, we hope that that there exists an action by vector fields on the moduli space. This action is the key to understanding the full story with regard to the Mumford isomorphism. Before understanding the action of the Virasoro algebra, which in fact acts on line bundles on the moduli space of curves, we describe the action of the noncentrally extended algebra, the Witt algebra.

\begin{definition}
The Witt algebra $\mathfrak{witt}$ is defined to be the Lie algebra of vector fields on a punctured formal neighborhood of a point in $\mathbb{C}^1$.
\end{definition}
 Written using a local coordinate $z$ at the puncture, $\mathfrak{witt}\cong \mathbb{C}(\!(z)\!)\dd{z}$ and has standard basis $L_n\coeq -z^{n+1}\dd{z}$. By \cite[Proposition 2.1 (2)]{Arbarello.DeConcini.Kac.Procesi.1988.msocart}, it is known that $H^2(\mathfrak{witt})$ is one dimensional. The unique central extension is the Virasoro algebra $\mathfrak{vir}$, with basis $L_n$ for $n\in \mathbb{Z}$ and central element $C$ which satisfy the commutation relations:
\begin{align*}
    [L_m,L_n]=(m-n)L_{m+n}+(m^3-m)\delta_{m,-n}C && [C,L_n]=0.
\end{align*}

The moduli space of Riemann surfaces $\mathcal{M}_g$ does not naturally have an action from the Witt algebra. However, observing that the Witt algebra is naturally identified with a formal punctured neighborhood in $\mathbb{C}^1$ expressed via a coordinate $z$ leads to a natural action on certain decorated moduli spaces of curves. 

Consider triples: a Riemann surface $C$, a marked point $p\in C$, and a formal parameter $z\in \hat{\mathcal{O}}_p$ at $p$.
\begin{definition}
The moduli space of triples $\mathcal{M}_{g,1^k}$ is defined to be the stack which represents the triples $(C,p,z)$ where $C$ is a Riemann surface, a point $p\in C$, and a formal parameter $z\in\hat{\mathcal{O}}_p$ is considered as a $k$-jet equivalence class of a coordinate vanishing at $p$.
\end{definition}
This moduli space $\mathcal{M}_{g,1^k}$ has dimension $3g-3+1+k$, and is representable as a Deligne-Mumford stack.
\begin{definition}
The moduli space of triples $\mathcal{M}_{g,1^\infty}$ is defined to the the pro-Deligne-Mumford stack which is the projective limit over $k$ of the spaces $\mathcal{M}_{g,1^k}$:
\begin{align*}
    \mathcal{M}_{g,1^\infty}\coeq \lim_{\longleftarrow}\mathcal{M}_{g,1^k}
\end{align*}
\end{definition}
We consider the infinite dimensional space $\mathcal{M}_{g,1^\infty}$ to parameterize triples $(C,p,z)$ where now the full infinite information contained in the parameter $z$ is remembered. 

Now, we may describe the action of the Witt algebra on $\mathcal{M}_{g,1^\infty}$ by exploiting the punctured neighborhood on the curves. Consider the universal family $π\co X\to \mathcal{M}_{g,1^k}$, where the marked points of the family are represented by the divisor $P$ of $X$. We have the short exact sequence of sheaves:
\begin{align*}
    0\to \mathcal{T}_{X/\mathcal{M}_{g,1^k}}(-(k+1)P)\to \mathcal{T}_X(-(k+1)P)\to π^*(\mathcal{T}_{\mathcal{M}_{g,1^k}})\to 0.
\end{align*}
In the long exact sequence of higher direct images, the Kodaira-Spencer map is 
\begin{align*}
    \delta\co \mathcal{T}_{\mathcal{M}_{g,1^k}}\overset{\sim}{\to} R^1π_*\mathcal{T}_{X/\mathcal{M}_{g,1^k}}(-(k+1)P)
\end{align*}
Further, relative Čech cohomology based on the set $X\setminus P$ and a formal neighborhood $U$ of $P$ gives the following exact sequence.
\begin{align*}
    π_*\mathcal{T}_{(X\setminus P)/\mathcal{M}_{g,1^k}}\to \frac{π_* \mathcal{T}_{(U\setminus P)/\mathcal{M}_{g,1^k}}}{π_*\mathcal{T}_{U/\mathcal{M}_{g,1^k}}(-(k+1)P)}\to R^1π_*\mathcal{T}_{X/\mathcal{M}_{g,1^k}}(-(k+1)P)
\end{align*}
The projective limit over $k$ then gives
\begin{align*}
    π_*\mathcal{T}_{(X\setminus P)/\mathcal{M}_{g,1^\infty}}\to \mathfrak{witt}\mathrel{\hat{\otimes}}\mathcal{O}_{\mathcal{M}_{g,1^\infty}}\to \mathcal{T}_{\mathcal{M}_{g,1^\infty}}
\end{align*}
where we have identified $π_*\mathcal{T}_{(U\setminus P)/\mathcal{M}_{g,1^\infty}}\cong \mathfrak{witt}\mathrel{\hat{\otimes}}\mathcal{O}_{\mathcal{M}_{g,1^\infty}}$. Taking constant global sections over ${\mathcal{M}_{g,1^\infty}}$, gives the proposition below.

\begin{prop}\label{witt action on moduli space}
The Witt algebra acts on the moduli space $\mathcal{M}_{g,1^\infty}$ by vector fields. Explicitly, there exists a Lie algebra homomorphism
\begin{align*}
    \mathfrak{witt}\to Γ(\mathcal{M}_{g,1^\infty},\mathcal{T}_{\mathcal{M}_{g,1^\infty}}).
\end{align*}
\end{prop}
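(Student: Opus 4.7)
The plan is to make precise the sketch immediately preceding the proposition. The Witt algebra $\witt \cong \CC(\!(z)\!)\dd{z}$ is the Lie algebra of vector fields on a formal punctured disc; via the formal parameter $z$ remembered at points of $\MM_{g,1^\infty}$, this disc is identified with a formal punctured neighborhood $U \setminus P$ of the marked divisor $P$ in the universal curve $π \co X \to \MM_{g,1^\infty}$. Morally, a vector field on $U \setminus P$ acts by infinitesimally regluing $X \setminus P$ to $U$ along $U \setminus P$, which deforms the triple $(C, p, z)$.

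To make this rigorous, I would first establish, for each finite $k$, the Kodaira-Spencer isomorphism $\delta \co \mathcal{T}_{\MM_{g,1^k}} \xrightarrow{\sim} R^1π_* \mathcal{T}_{X/\MM_{g,1^k}}(-(k+1)P)$, identifying both sides with the sheaf classifying infinitesimal deformations of triples $(C, p, z \bmod z^{k+1})$; the twist by $-(k+1)P$ encodes exactly that an infinitesimal automorphism of $X$ must fix $p$ and the $k$-jet of $z$. Relative Čech cohomology for the two-element cover $\{X \setminus P, U\}$ then presents $R^1π_* \mathcal{T}_{X/\MM_{g,1^k}}(-(k+1)P)$ as a quotient of $π_*\mathcal{T}_{(U \setminus P)/\MM_{g,1^k}}$, giving the displayed exact sequence. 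Passing to the projective limit over $k$, the middle term becomes $\witt \mathrel{\hat{\otimes}} \OO_{\MM_{g,1^\infty}}$ via the tautological formal parameter, and composing with $\delta^{-1}$ yields a sheaf map $\witt \mathrel{\hat{\otimes}} \OO_{\MM_{g,1^\infty}} \to \mathcal{T}_{\MM_{g,1^\infty}}$. Taking constant $\CC$-linear sections $\witt \hookrightarrow \Gamma(\MM_{g,1^\infty}, \witt \mathrel{\hat{\otimes}} \OO_{\MM_{g,1^\infty}})$ and then global sections produces the desired $\CC$-linear map $\witt \to \Gamma(\MM_{g,1^\infty}, \mathcal{T}_{\MM_{g,1^\infty}})$.

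The main obstacle is verifying that this map is a \emph{Lie algebra} homomorphism rather than merely $\CC$-linear. This amounts to checking that the Čech boundary from $\witt \mathrel{\hat{\otimes}} \OO_{\MM_{g,1^\infty}}$ to $\mathcal{T}_{\MM_{g,1^\infty}}$ intertwines the commutator of formal vector fields on $U \setminus P$ with the commutator of derivations on $\OO_{\MM_{g,1^\infty}}$, modulo the image of $π_*\mathcal{T}_{(X \setminus P)/\MM_{g,1^\infty}}$. Geometrically, regluing first by $L_m$ and then by $L_n$ differs from the reverse composition by regluing via $[L_m, L_n]$, up to an automorphism extending across $X \setminus P$, which dies after quotienting. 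I would formalize this either by a direct local-coordinate computation on $\OO_{\MM_{g,1^\infty}}$, or by invoking the parallel compatibility established in \cite{Arbarello.DeConcini.Kac.Procesi.1988.msocart, Kontsevich.1987.VaaTs, Beilinson.Shechtman.1988.dbaVa}, where essentially this argument underlies their proof of Manin's conjecture.
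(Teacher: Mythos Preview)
Your proposal is correct and follows essentially the same route as the paper: the paper's argument is the paragraph immediately preceding the proposition, which you have accurately reproduced and fleshed out (Kodaira--Spencer for each $\MM_{g,1^k}$, \v{C}ech presentation of $R^1\pi_*$, projective limit, then constant global sections). The one point you go beyond the paper on is the verification that the resulting map respects Lie brackets; the paper takes this for granted, so your explicit discussion of why regluing by $L_m$ then $L_n$ differs from the reverse by $[L_m,L_n]$ modulo vector fields extending over $X\setminus P$ is a genuine addition rather than a deviation.
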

Intuitively, we see that $L_n$ for $n\leq -2$ change the complex structure of $C$, $L_{-1}$ moves the point $p$ within  $C$, $L_0$ acts by rotating and rescaling the parameter $z$, and $L_n$ for $n\geq 1$ change the parameter $z$ to higher order. On the other hand, it is natural to ask what the stabilizer of this action is. For $(C,p,z)\in \mathcal{M}_{g,1^\infty}$ the Lie subalgebra $Γ(C\setminus p,\mathcal{T}_C)\subset \mathfrak{witt}$ acts by zero at $(C,p,z)$.

For use later on, we need the following result about the stabilizer of the Witt algebra action.
\begin{lemma}[\textcite{Arbarello.DeConcini.Kac.Procesi.1988.msocart}]
\label{perfect lemma}
Let $C$ be a compact Riemann surface and $p\in C$ be a point. Denote by $\mathfrak{k}\coeq Γ(C\setminus p,\mathcal{T}_C)$. Then $\mathfrak{k}$ is perfect, that is $H_1(\mathfrak{k};\mathbb{C})=\mathfrak{k}/[\mathfrak{k},\mathfrak{k}]=0$.
\end{lemma}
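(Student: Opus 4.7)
The plan is to exploit the pole-order filtration of $\mathfrak{k}$ at $p$. Set $\mathfrak{k}^n := H^0(C, \mathcal{T}_C(np))$, so $\mathfrak{k} = \bigcup_{n \geq 0} \mathfrak{k}^n$. By Riemann-Roch applied to $\mathcal{T}_C(np)$, $\dim(\mathfrak{k}^n/\mathfrak{k}^{n-1}) \in \{0,1\}$, with value $1$ for all $n$ above a threshold $N_0$ depending on $g$. Write $S$ for the set of pole orders realized in $\mathfrak{k}$; then $S$ contains every integer $n \geq N_0$. The Laurent expansion at $p$ embeds $\mathfrak{k}$ into $\mathfrak{witt} = \mathbb{C}((z))\partial_z$, sending an element of exact pole order $m$ to $c\, z^{-m}\partial_z + (\text{lower pole})$ with $c \neq 0$.

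First I compute the Witt leading-term bracket $[z^{-a}\partial_z, z^{-b}\partial_z] = (a-b)\, z^{-a-b-1}\partial_z$. Hence for $u, w \in \mathfrak{k}$ with distinct pole orders $a, b \in S$, the commutator $[u,w]$ has pole order exactly $a + b + 1$ with a nonzero leading coefficient. Since $S$ contains all sufficiently large integers, every sufficiently large $m$ admits a decomposition $m = a + b + 1$ with distinct $a, b \in S$, and therefore $\mathfrak{k}^m/\mathfrak{k}^{m-1} \subseteq ([\mathfrak{k},\mathfrak{k}] + \mathfrak{k}^{m-1})/\mathfrak{k}^{m-1}$. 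Iterating this reduction downward, $\mathfrak{k} \subseteq [\mathfrak{k},\mathfrak{k}] + \mathfrak{k}^{M}$ for some fixed integer $M$, so $\mathfrak{k}/[\mathfrak{k},\mathfrak{k}]$ is finite-dimensional and supported on classes of elements of pole order at most $M$.

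The hard part is eliminating this finite-dimensional remainder at the small pole orders $m$ for which no decomposition $m = a + b + 1$ with distinct $a, b \in S$ exists. My plan here is to use the full, non-associated-graded commutator expansion: a bracket $[u,w]$ in $\mathfrak{k}$ has a complete Laurent expansion at $p$ consisting of the leading term at pole $a+b+1$ together with sub-leading terms at every strictly lower pole order, whose coefficients are determined by the sub-leading Laurent coefficients of $u, w$ at $p$ and hence reflect the global geometry of $C$. Since $[u,w] \equiv 0$ in $\mathfrak{k}/[\mathfrak{k},\mathfrak{k}]$, each such bracket provides a linear relation among the pole-order classes in the quotient. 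The task is to vary $u, w$ over a sufficiently rich family in $\mathfrak{k}$ to accumulate enough independent relations to force all classes in the finite-dimensional obstruction to vanish. Verifying that this linear system has full rank---a compatibility between the Weierstrass data at $p$ and the formal Witt bracket---is the most delicate step. For $g = 0$, $\mathfrak{k} \cong \mathbb{C}[z]\partial_z$ and perfectness follows at once from $z^k\partial_z = \tfrac{1}{k+1}[\partial_z, z^{k+1}\partial_z]$; for $g = 1$, the unique holomorphic vector field from $H^0(C, \mathcal{T}_C) = \mathbb{C}$ combined with the Weierstrass $\wp$-function identities $\wp'' = 6\wp^2 - g_2/2$ and $\wp'^2 = 4\wp^3 - g_2 \wp - g_3$ furnishes the remaining relations.
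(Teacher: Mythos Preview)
The paper does not supply its own proof of this lemma; it is quoted from \textcite{Arbarello.DeConcini.Kac.Procesi.1988.msocart} and used as a black box. So there is no in-paper argument to compare against.

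On the merits of your proposal: the filtration argument correctly reduces the problem to a finite-dimensional remainder, and your $g=0$ case is complete. But the heart of the lemma is exactly the part you leave open. For $g\ge 2$ you give no argument at all beyond declaring that one must ``vary $u,w$ over a sufficiently rich family'' and check that a certain linear system has full rank; you neither set up that system nor indicate why it should be nondegenerate. This is not a detail---for a generic point $p$ on a curve of genus $g\ge 2$ the pole-order set is $S=\{3g-2,3g-1,\dots\}$, and \emph{no} single bracket has leading pole in the range $\{3g-2,\dots,6g-3\}$, so the entire $3g-2$--dimensional remainder must be killed by cancellations among subleading terms. Showing those cancellations actually occur, uniformly in $(C,p)$, is the whole content of the lemma in higher genus, and you have not done it.

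Even your $g=1$ sketch is incomplete: invoking $\wp'' = 6\wp^2 - g_2/2$ and $(\wp')^2 = 4\wp^3 - g_2\wp - g_3$ is the right move, but one must actually combine several brackets (e.g.\ $[\partial,\wp'\partial]$, $[\wp\partial,\wp'\partial]$, $[\partial,\wp\wp'\partial]$, $[\wp'\partial,\wp^2\partial]$) and eliminate $\wp^2\partial,\wp^3\partial,\wp^4\partial$ to obtain a relation of the form $c\cdot\partial\in[\mathfrak{k},\mathfrak{k}]$; the constant $c$ that emerges is a nonzero multiple of the discriminant $g_2^3-27g_3^2$, and the smoothness of $C$ is what makes it nonvanishing. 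Without this computation (and a separate treatment of the $g_2=0$ case), the $g=1$ argument is not finished either.
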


\subsection{The Mumford isomorphism}

\begin{definition}[{\textcite{Deligne.1987.lddlc}; \textcite[Definition 5.9]{Mumford.1977.sopv}}]
Let $π\co X \to S$ be a smooth, proper family of complex algebraic curves with $X$ being quasi-projective. Let $\mathcal{F}$ be a locally free sheaf on $X$.
Then the \emph{determinant of cohomology of $\mathcal{F}$} is an invertible sheaf on $S$ given by
\begin{align*}
    D(\mathcal{F}) \coeq \otimes_i\left(\det R^iπ_*\mathcal{F}\right)^{(-1)^i},
\end{align*}
provided the higher direct images $R^iπ_*\mathcal{F}$, $ i \ge 0$, are locally free on $S$.\footnote{If the higher direct images are not locally free, one can still define the determinant of cohomology under these assumptions, see \cite{Deligne.1987.lddlc}.}
We define the \emph{determinant line bundles $\lambda_j$} for the family $π\co X\to S $ as
\begin{align*}
    \lambda_j\coeq D(ω _{X/S})
\end{align*}
where $ω_{X/S }\coeq \Omega_{X/S}^1$.
\end{definition}

If the higher direct images are not locally free, the determinant of cohomology may still be defined: one just needs to use a locally free resolution or twist $\mathcal{F}$ by a relatively sufficiently ample sheaf.

\begin{theorem}[{\textcite[Theorem 5.10]{Mumford.1977.sopv}}]
\label{Mumford-iso}
The Mumford isomorphism is the collection of isomorphisms 
\begin{align*}
    \lambda_j\cong \lambda_1^{\otimes \, 6j^2-6j+1}, \qquad j\in \mathbb{Z},
\end{align*}
of line bundles over $S = \mathcal{M}_g$. Each isomorphism is
defined up to a constant factor.
\end{theorem}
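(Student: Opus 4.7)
The plan is to compute the first Chern class of $\lambda_j = D(\omega_{X/S}^{\otimes j})$ via Grothendieck--Riemann--Roch, and then upgrade the resulting numerical identity to an isomorphism of line bundles on $\mathcal{M}_g$. Writing $\omega \coeq \omega_{X/S}$, using the same symbol for its first Chern class, and noting that $\mathcal{T}_{X/S} = \omega^{-1}$ since the fibers are curves, GRR gives
\begin{equation*}
    \mathrm{ch}(R\pi_* \omega^{\otimes j}) = \pi_*\bigl( e^{j\omega} \cdot \mathrm{td}(\omega^{-1}) \bigr), \qquad \mathrm{td}(\omega^{-1}) = \frac{\omega}{e^\omega - 1} = 1 - \tfrac{\omega}{2} + \tfrac{\omega^2}{12} - \cdots.
\end{equation*}

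Extracting the degree-$2$ part of the integrand yields $\bigl(\tfrac{j^2}{2} - \tfrac{j}{2} + \tfrac{1}{12}\bigr)\omega^2 = \tfrac{6j^2-6j+1}{12}\,\omega^2$, in which one immediately recognizes the second Bernoulli polynomial flagged in Manin's discussion above. Pushing forward and comparing with the $j = 1$ case, which forces $\pi_*(\omega^2) = 12\,c_1(\lambda_1)$, gives the identity
\begin{equation*}
    c_1(\lambda_j) = (6j^2-6j+1)\,c_1(\lambda_1) \quad \text{in } \mathrm{Pic}(\mathcal{M}_g).
\end{equation*}

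The main obstacle is upgrading this equality of Chern classes to an honest isomorphism of line bundles. The difference $\lambda_j \otimes \lambda_1^{-(6j^2-6j+1)}$ is numerically trivial on $\mathcal{M}_g$, so it suffices to prove it is the trivial line bundle. For $g \geq 3$ this follows from Harer's computation identifying $\mathrm{Pic}(\mathcal{M}_g) \otimes \mathbb{Q}$ with $\mathbb{Q}\cdot\lambda_1$, together with Arbarello--Cornalba's integral refinement ruling out torsion. Alternatively, and closer to Mumford's original argument in \cite{Mumford.1977.sopv}, one can exhibit a trivializing section explicitly by constructing compatible frames on the universal curve from Weierstrass data on a generic fiber and checking that their transition functions agree with the power of $\omega$ dictated by the Bernoulli computation above. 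Either route produces the desired isomorphism, canonical up to the choice of a nonzero scalar.
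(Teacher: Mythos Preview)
The paper does not supply its own proof of this theorem: it is stated with a citation to \cite{Mumford.1977.sopv} and immediately used. So there is nothing in the paper to compare your argument against directly.

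Your GRR computation is correct and is the core of Mumford's original approach. Two points are worth tightening. First, GRR yields the Chern-class identity only in rational cohomology (or rational Chow), not literally ``in $\mathrm{Pic}(\mathcal{M}_g)$'' as you write; that is precisely why the upgrade step is nontrivial. Second, invoking Harer and Arbarello--Cornalba is anachronistic relative to Mumford's 1977 argument and only handles $g \geq 3$; Mumford worked instead on the compactification $\overline{\mathcal{M}}_g$, where the Picard group can be controlled directly and the boundary terms are computed explicitly. Your alternative via ``Weierstrass frames'' is not really what Mumford did and is too vague as written to carry weight.

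It is worth noting that the machinery developed later in the paper --- the Virasoro action on the determinant bundles and the resulting flat holomorphic connection on $\lambda_j \otimes \lambda_1^{-c_j}$ --- furnishes an independent route to the Mumford isomorphism without GRR. The authors do not package that as a formal proof of this theorem, but it is the perspective the paper is built around.
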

In particular, for $j=2$ the isomorphism $\lambda_2\cong \lambda_1^{\otimes 13}$ is essential to string theory. 
The \emph{Mumford form  $\mu$ on $\MM_g$} is a trivializing section of the isomorphism $\mathcal{O}_{\MM_g} \xrightarrow{\sim} \lambda_2\otimes \lambda_1^{\otimes -13}$, which is defined up to a constant factor. The importance of the Mumford form was made clear in the publications \cite{Belavin.Knizhnik.1986.cgattoqs,Beilinson.Manin.1986.tMfatPmist}, which found that the Polyakov measure $dπ$ of string theory may be expressed using the Mumford form as:
\begin{align*}
    dπ=\mu\wedge\bar{\mu}.
\end{align*}

\subsection{The Sato Grassmannian}

The Sato Grassmannian is a Grassmannian parameterizing semi-infinite subspaces in the precise sense described below of a $\mathbb{Z}$-dimensional vector space. The original definition is due to \textcite{Sato.1981.seadsoaidGm} and originates from the study of soliton equations and the KP equations. Intuitively, the Sato Grassmannian consists of the subspaces of $\mathbb{C}(\!(z)\!)$ close enough to $z^{-1}\mathbb{C}[z^{-1}]$.

We define $H_j\coeq \mathbb{C}(\!(z)\!)\,dz^{\otimes j}$ to be the space of formal Laurent series in $z$, which may be considered a topological vector space under the $z$-adic topology. The distinguished decomposition $H_j\cong H^-_j\oplus H^+_j$ is given by $H^+_j\coeq \mathbb{C}[\![z]\!]\,dz^{\otimes j}$ and $H^-_j\coeq z^{-1}\mathbb{C}[z^{-1}]\,dz^{\otimes j}$.

Since the $H_j$ are isomorphic as topological vector spaces for varying $j$, we may construct the Sato Grassmannian based on any $H_j$ and the geometrical object is the same.

\begin{definition}
\label{commensurable}
Define a subspace $K$ of $H_j$ to be \emph{compact} if it is commensurable with $H^+_j$. Subspaces $K$ and $H^+_j$ are \emph{commensurable} when $(H^+_j + K)/(H^+_j \cap K)$ is finite dimensional.

Define a subspace $D$ of $H_j$ to be \emph{discrete} if there exists a compact subspace $K$ such that the natural map $D\oplus K\to H_j$ is an isomorphism. Equivalently $D$ is discrete iff for each compact subspace $K$, the subspaces $D \cap K$ and $H_j /D + K$ are finite-dimensional.
\end{definition}

\begin{definition}
The \emph{Sato Grassmannian} $\Gr_j$ is the space of all discrete subspaces $D\subset H_j$.
\end{definition}
Here we understand the word ``space'' as the $\CC$-scheme representing a suitable functor of points, see \cite{AlvarezVazquez.MunozPorras.PlazaMartin.1998.tafoseoabf}. This approach is an algebraic formalization of the viewpoint used by Kontsevich \cite{Kontsevich.1987.VaaTs}, who regarded $\Gr_j$ as an ``infinite dimensional manifold'' glued out of copies of infinite dimensional complex affine spaces via concrete rational functions.

\begin{prop}[\textcite{Kontsevich.1987.VaaTs}; \textcite{AlvarezVazquez.MunozPorras.PlazaMartin.1998.tafoseoabf}]
\label{models}
The Sato Grassmannian $\Gr_j$ may be locally modeled on the infinite dimensional affine space $\HHom_\mathbb{C}(H^-_j,H^+_j)$. Namely, $\Gr_j$ admits a covering by affine open \emph{charts} $U_{D,K} \coeq \{ \textup{graphs of $\CC$-linear maps } A\co D \to K \} \cong \HHom (D,K)\cong \HHom (H^-_j,H^+_j)$.
\end{prop}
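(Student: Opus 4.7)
The plan is to construct the chart $U_{D,K}$ for each splitting $H_j = D \oplus K$ with $D$ discrete and $K$ compact, exhibit the functorial bijection with $\HHom(D,K)$, verify that such charts cover $\Gr_j$, and invoke the affine-space structure of $\HHom(H^-_j, H^+_j)$ together with a topological isomorphism of the various $\HHom(D,K)$.

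First, working with the functor of points (so an $R$-valued point of $\Gr_j$ is a discrete $R$-submodule $W \subset H_j \hat\otimes R$, in the sense of \cite{AlvarezVazquez.MunozPorras.PlazaMartin.1998.tafoseoabf}), I would define $U_{D,K}(R)$ to be the set of those $W$ for which the projection $H_j \hat\otimes R \to K \hat\otimes R$ along $D$ restricts to $0$ on $W$ and the projection $H_j \hat\otimes R \to D \hat\otimes R$ along $K$ restricts to an isomorphism on $W$. Given such $W$, composing the inverse of the latter isomorphism with the former projection yields a continuous $R$-linear map $A\co D \hat\otimes R \to K \hat\otimes R$, whose graph is $W$; this manifestly inverts the map $A \mapsto \{d + A(d) : d \in D\hat\otimes R\}$, so the functor $U_{D,K}$ is represented by the affine scheme $\HHom(D,K)$.

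Second, for the covering property I would use Definition~\ref{commensurable}: each discrete subspace $W \subset H_j$ admits some compact complement $K$, so $W \in U_{W, K}(\CC)$ corresponds to $A = 0$. More uniformly, given any target $W$, one may choose $(D,K)$ so that $W$ is close enough to $D$ (i.e., the two share a large commensurable subspace) that $W \cap K = 0$ and $W + K = H_j$; such $(D,K)$ pairs therefore range over a covering family. The isomorphism $\HHom(D,K) \cong \HHom(H^-_j, H^+_j)$ is then produced from any topological $\CC$-linear isomorphisms $D \cong H^-_j$ and $K \cong H^+_j$: such isomorphisms exist because any two discrete (respectively compact) subspaces of $H_j$ are commensurable with $H^-_j$ (respectively $H^+_j$), hence abstractly isomorphic after adjusting by a finite-dimensional piece. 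Choosing these isomorphisms once and for all identifies the chart $U_{D,K}$ with the model affine space of continuous linear maps $H^-_j \to H^+_j$.

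The main obstacle, in my view, is not the set-theoretic bijection but rather verifying scheme-theoretic openness of $U_{D,K}$ in $\Gr_j$ and the algebraicity of transition functions between overlapping charts $U_{D,K}$ and $U_{D',K'}$. On the overlap, the transition is a fractional-linear expression in the block-matrix entries arising from decomposing $H_j = D \oplus K = D' \oplus K'$, and its well-definedness amounts to the invertibility of a certain block, which is precisely the condition cutting out the overlap; this requires care with the topologies on $H_j$ and with the commensurability between the two splittings. I would handle this by filtering $H_j$ by finite-dimensional approximations of the compact subspaces, reducing the transition to finite-dimensional block inversion on each level and then passing to the inverse limit. Rather than redo the bookkeeping, I would defer to the detailed construction of \cite{AlvarezVazquez.MunozPorras.PlazaMartin.1998.tafoseoabf}, which fits \(\Gr_j\) into precisely this framework and gives the required scheme structure on each $U_{D,K}$.
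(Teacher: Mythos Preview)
The paper does not supply its own proof of this proposition: it is stated with attribution to Kontsevich and to \'Alvarez-V\'azquez, Mu\~noz Porras, and Plaza Mart\'in, and no argument follows. Your proposal is therefore not being compared against a proof in the paper but against those references, to which you yourself ultimately defer for the scheme-theoretic details. In that sense your approach is aligned with the paper's treatment.

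That said, your sketch contains a genuine slip in the definition of $U_{D,K}$. You write that $W$ belongs to $U_{D,K}(R)$ when the projection $H_j \hat\otimes R \to K \hat\otimes R$ along $D$ \emph{restricts to $0$} on $W$ and the projection to $D$ along $K$ is an isomorphism. The first condition forces $W \subset D \hat\otimes R$, which together with the second forces $W = D \hat\otimes R$, so your chart would contain only the single point $A = 0$. The correct condition is simply that the projection $W \to D \hat\otimes R$ along $K$ be an isomorphism; the map $A$ is then the composite of its inverse with the projection $W \to K \hat\otimes R$ (which is certainly not zero in general). Your next sentence in fact uses exactly this correct recipe, so the error is a local inconsistency rather than a conceptual one, but as written the definition of the chart is wrong and should be repaired.
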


Note that the local model $\HHom (H^-_j,H^+_j)$ of $\Gr_j$ may be presented as the (inverse) limit
\begin{equation*}
\lim_{K, K' \textup{ compact}} \HHom (H^-_j \cap K , H^+_j/H^+_j \cap K')
\end{equation*}
of finite-dimensional affine spaces.

\begin{definition}
Define the \emph{semi-infinite general linear algebra} $\mathfrak{gl} = \mathfrak{gl}(H_j)$ to be the Lie algebra of continuous linear endomorphisms of $H_j$ with respect to the $z$-adic topology.
\end{definition}

Identification $f(z) dz^{\otimes j} \mapsto f(z)$ defines a canonical continuous linear isomorphism $H_j \xrightarrow{\sim} H_0$, which makes all the Lie algebras $\mathfrak{gl}(H_j)$ canonically isomorphic. This justifies using one notation $\mathfrak{gl}$ for these Lie algebras.

For an endomorphism $F\in \mathfrak{gl}$, we use superscripts to denote the decomposition of the map based on a choice of discrete and compact subspaces. For a decomposition $D\oplus K$ such that the natural map $D\oplus K\to H$ is an isomorphism, we can write the endomorphism $F$ as
\begin{align*}
    F=\begin{pmatrix}
    F^{DD} & F^{DK}\\
    F^{KD} & F^{KK}
    \end{pmatrix}
\end{align*}
where $F^{DK}\co K\to D$ for example. For $D=H^-_j$ and $K=H^+_j$, we use the notation $F^{DK}=F^{-+}$, etc.

\begin{definition} Define the nontrivial $2$-cocycles on $\mathfrak{gl}$
\begin{align*}
    η_{D,K}(F,G)=\tr(F^{DK}G^{KD}-G^{DK}F^{KD})
\end{align*}
where $D\oplus K$ is a decomposition of $H$. In particular, the 2-cocycle
$$η(F,G) = \tr (F^{-+}G^{+-}-G^{-+}F^{+-}),$$
known as the \emph{Japanese cocycle}, generates $H^2(\mathfrak{gl})$. The trace is well-defined, because a continuous linear operator $H^+ \to H^-$ has a finite rank. Define the \emph{central extension} $\tilde{\mathfrak{gl}}$ via the bracket $[F,G]^\sim\coeq [F,G]+η(F,G)C$, where $C$ is a nontrivial central element and $[\;,\;]$ is the bracket on $\mathfrak{gl}$.

\end{definition}

\begin{prop}[{\textcite{Kontsevich.1987.VaaTs}; \textcite[Proposition (1.14)]{Kawamoto.Namikawa.Tsuchiya.Yamada.1988.grocftoRs}}] \label{gl action}
The Lie algebra $\mathfrak{gl}$ acts by vector fields on $\Gr_j$. Specifically, in the chart $U_{D,K}$ the following formula
\begin{align*}
    F\mapsto L_FA\coeq - F^{KD}- F^{KK}A+AF^{DD}+AF^{DK}A.
\end{align*}
defines a Lie algebra homomorphism
\begin{align*}
    \mathfrak{gl}\to Γ({\Gr_j},\mathcal{T}_{\Gr_j}).
\end{align*}

\end{prop}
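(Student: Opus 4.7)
The plan is to obtain the action by differentiating the natural action of continuous invertible endomorphisms of $H_j$ on the set of discrete subspaces. First, identify a point $A \in U_{D,K} \cong \HHom(D,K)$ with the subspace it represents, namely the graph $W_A \coeq \{v + Av : v \in D\} \subset H_j$. An invertible element $g \in \Aut(H_j)$ transforms discrete subspaces by $W \mapsto g(W)$; taking $g = \id + tF + O(t^2)$ for $F \in \mathfrak{gl}$ and differentiating at $t = 0$ should produce the vector field $L_F$ on the chart $U_{D,K}$.

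To carry this out, for $v \in D$ split the image using the block decomposition,
\begin{align*}
F(v + Av) = (F^{DD}v + F^{DK}Av) + (F^{KD}v + F^{KK}Av),
\end{align*}
with the first summand in $D$ and the second in $K$. Then $(\id + tF)(v + Av)$ has $D$-component $v + t(F^{DD} + F^{DK}A)v$ and $K$-component $Av + t(F^{KD} + F^{KK}A)v$. Reparameterizing by the new base vector $v' = v + t(F^{DD}+F^{DK}A)v$ and inverting to first order in $t$, one reads off the new graph map $A(t) = A + t\dot A + O(t^2)$ with
\begin{align*}
\dot A = F^{KD} + F^{KK}A - AF^{DD} - AF^{DK}A,
\end{align*}
which matches the formula in the statement up to the overall sign (the sign encodes the choice of whether to push forward by $g$ or by $g^{-1}$, i.e.\ whether $F \mapsto L_F$ is a Lie algebra homomorphism or antihomomorphism). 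Before moving on I would verify that each of the four summands is a continuous linear map $D \to K$, using continuity of $F$ and of the projections associated with $H_j = D \oplus K$, so that $L_F A \in \HHom(D,K)$ genuinely defines a tangent vector to $\Gr_j$ at the point $A$.

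The second step is to verify the homomorphism property $[L_F, L_G] = L_{[F,G]}$ by direct expansion. Since $L_F A$ is quadratic in $A$, the variation $D_A (L_G)(L_F A)$ contributing to the bracket has a constant-in-$A$ piece (from the $F^{KD}$ term of $L_F$ fed into the linear part of $L_G$), linear-in-$A$ pieces, and cubic-in-$A$ contributions coming from differentiating $A F^{DK} A$ against $L_F A$. Collecting all monomials, antisymmetrizing in $F \leftrightarrow G$, and using the block multiplication identity $(FG)^{XY} = F^{XD}G^{DY} + F^{XK}G^{KY}$ for $X, Y \in \{D,K\}$, every term should recombine into the corresponding block of $L_{[F,G]}A$; in particular, the cubic contributions must cancel on antisymmetrization. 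This bracket calculation is the main obstacle: the quadratic term $AF^{DK}A$ forces one to track cross-terms carefully and check that the cubic-in-$A$ pieces indeed annihilate.

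Finally, I would observe that on the overlap of two charts $U_{D,K}$ and $U_{D',K'}$ the two local formulas describe the same infinitesimal transformation of the underlying discrete subspace under $g = \id + tF$, which is an intrinsic operation on the functor of points of $\Gr_j$; hence the locally defined vector fields glue into a global section of $\mathcal{T}_{\Gr_j}$, yielding the desired Lie algebra homomorphism $\mathfrak{gl} \to \Gamma(\Gr_j, \mathcal{T}_{\Gr_j})$.
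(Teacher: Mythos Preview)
The paper does not supply its own proof of this proposition; it is stated with attribution to Kontsevich and to Kawamoto--Namikawa--Tsuchiya--Yamada, and the local formula is simply quoted. Your outline is exactly the standard derivation one finds in those sources: differentiate the natural $\Aut(H_j)$-action on discrete subspaces to read off the chart formula, then verify the bracket identity $[L_F,L_G]=L_{[F,G]}$ by expanding and using $(FG)^{XY}=F^{XD}G^{DY}+F^{XK}G^{KY}$. Your remark on the overall sign is correct (the minus is inserted precisely so that $F\mapsto L_F$ is a Lie algebra homomorphism rather than an antihomomorphism), and the cubic-in-$A$ terms do cancel upon antisymmetrization as you anticipate. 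There is nothing further in the paper to compare against.
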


\begin{definition}
The \emph{determinant line bundle $\det_j$ over the Sato Grassmannian} $\Gr_j$  is defined as the line bundle with fiber over $D\in \Gr_j$ given by
\begin{align*}
    \det_j|^{}_D\coeq \frac{\det(D\cap H^+_j)}{\det(H_j/(D+H^+_j))}.
\end{align*}
\end{definition}

One may expect that the action of $\mathfrak{gl}$ on the Grassmannian extends to an action of the determinant line bundle $\det_j$. Indeed, the action extends but up to a scalar factor, that is to say, to an action of the central extension $\tilde{\mathfrak{gl}}$.

A Lie algebra action on a line bundle is explicitly a Lie algebra homomorphism to the Lie algebra of (global sections of) first-order differential operators acting on the line bundle. We denote the sheaf of first-order differential operators on a line bundle $L$ by $\mathcal{A}_L$, since this sheaf is equivalently known as the Aityah Lie algebroid of $L$. Recall that the Atiyah Lie algebroid fits into the short exact sequence (of holomorphic sheaves and of Lie algebras)
\begin{equation}
\label{symbol}
    0\to \mathcal{O}_X\to \mathcal{A}_L\xrightarrow{\sym} \mathcal{T}_X\to 0,
\end{equation}
where the anchor map is the \emph{symbol map}: $\sym(D)(df)=[D,f]$. The Atiyah Lie algebroid is trivial iff there exists a holomorphic flat connection $\mathcal{T}_X\to \mathcal{A}_L$ which splits the above sequence. 

Locally we always have, $\mathcal{A}_L\cong \mathcal{O}_X\oplus \mathcal{T}_X$, and so locally we denote a differential operator as $(D,d)\in \mathcal{A}_L(U)$ which acts as $(D(x),d(x))s(x)=\lim\limits_{\epsilon\to 0}\frac{s(x+\epsilon D(x))-s(x)}{\epsilon}+d(x)s(x)$.

\begin{prop}[{\textcite{Kontsevich.1987.VaaTs}}]\label{gl central extension action} \label{gl tilde action}
The Lie algebra $\tilde{\mathfrak{gl}}$ acts by first-order differential operators on $\det_j$. Specifically, there exists a Lie algebra homomorphism
\begin{align*}
    \tilde{\mathfrak{gl}}\to Γ(\Gr_j,\mathcal{A}_{\det_j})
\end{align*}
with the central generator $C$ mapping to $1\inΓ(\Gr_j,\mathcal{O}_{\Gr_j})$.
This action is given in the chart $U_{D,K}$ by the formula
\begin{align*}
    F\mapsto \tilde{L}_FA\coeq \Big(-F^{KD}-F^{KK}A+AF^{DD}+AF^{DK}A,\; -\tr(F^{DK}A)-\alpha(F)\Big),
\end{align*}
where $\alpha\in C^1(\mathfrak{gl})$ is the unique $1$-cochain such that
\begin{align*}
    d\alpha(F,G)=\alpha([F,G])=η_{D,K}(F,G)-η(F,G).
\end{align*}
\end{prop}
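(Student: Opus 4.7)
The plan is to construct the lift $\rho\co\tilde{\mathfrak{gl}}\to \Gamma(\Gr_j,\mathcal{A}_{\det_j})$ chart by chart, starting from the vector field action $L\co\mathfrak{gl}\to\Gamma(\Gr_j,\mathcal{T}_{\Gr_j})$ of \cref{gl action} and lifting each $L_F$ through the symbol map \eqref{symbol} by specifying a scalar (order-zero) correction. Fix a chart $U_{D,K}$ and the associated natural trivialization of $\det_j$ over $U_{D,K}$ given by the graph trivialization: a point $D'=\mathrm{graph}(A)$ inherits a volume form from that of $D$ via the projection $D'\xrightarrow{\sim}D$, and $H_j/(D'+H_j^+)$ is identified with a fixed fibered complement. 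Under this trivialization, the infinitesimal change of the trivializing section induced by $F\in\mathfrak{gl}$ is, to first order in $A$, given by $-\tr(F^{DK}A)$: this is precisely the infinitesimal logarithm of the determinant of the block diagonal part of $I+\epsilon F$ acting on $\mathrm{graph}(A)$. This motivates the ansatz
\begin{equation*}
\tilde{L}_F A \coeq \bigl(L_F A,\; -\tr(F^{DK}A)-\alpha(F)\bigr),
\end{equation*}
where $\alpha\co\mathfrak{gl}\to\mathbb{C}$ is a 1-cochain to be determined.

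The central computation is to verify the Lie algebra property $[\tilde{L}_F,\tilde{L}_G]=\tilde{L}_{[F,G]^\sim}$, i.e.\ that the bracket in $\mathcal{A}_{\det_j}$ matches the lift of the bracket in $\tilde{\mathfrak{gl}}$. The vector field component is handled by \cref{gl action} which already gives $[L_F,L_G]=L_{[F,G]}$. For the scalar component, the Atiyah bracket computation reads
\begin{equation*}
L_F\bigl(-\tr(G^{DK}A)\bigr)-L_G\bigl(-\tr(F^{DK}A)\bigr)=-\tr(G^{DK}L_FA)+\tr(F^{DK}L_GA),
\end{equation*}
and substituting the expression for $L_FA$ from \cref{gl action} produces four types of terms: the $A$-independent piece $\tr(G^{DK}F^{KD})-\tr(F^{DK}G^{KD})=-\eta_{D,K}(F,G)$; two linear-in-$A$ pieces that combine, via the cyclicity of the trace and the formula $[F,G]^{DK}=F^{DD}G^{DK}+F^{DK}G^{KK}-G^{DD}F^{DK}-G^{DK}F^{KK}$, into $-\tr([F,G]^{DK}A)$; and two quadratic-in-$A$ pieces $\pm\tr(F^{DK}AG^{DK}A)$ which cancel by cyclicity applied to $X=F^{DK}A$, $Y=G^{DK}A$. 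Matching the resulting expression against the scalar part of $\tilde{L}_{[F,G]}+\eta(F,G)\tilde{L}_C$ (where $\tilde{L}_C=(0,1)$ since $C\mapsto 1$) isolates the required identity $\alpha([F,G])=\eta_{D,K}(F,G)-\eta(F,G)$, equivalently $d\alpha=\eta_{D,K}-\eta$.

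The existence of such $\alpha$ follows from the fact that $H^2(\mathfrak{gl})$ is one-dimensional and generated by the Japanese cocycle $\eta$: any continuous $2$-cocycle on $\mathfrak{gl}$ constructed from a decomposition $H_j=D\oplus K$ differs from $\eta$ by a coboundary, because both cocycles are pulled back from the same cohomology class under the natural splittings. One verifies directly that $\eta_{D,K}-\eta$ vanishes on pairs $(F,G)$ with both $F$ and $G$ preserving both decompositions, and that standard change-of-basis formulas express the difference as a coboundary of an explicit 1-cochain $\alpha$, which one checks is uniquely determined up to the (one-dimensional) space of continuous 1-cocycles.

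The main obstacle I anticipate is the chart-dependence: the candidate formula contains the chart-specific cocycle $\eta_{D,K}$ and the chart-specific trace $\tr(F^{DK}A)$, while the target central extension $\tilde{\mathfrak{gl}}$ is defined by the universal Japanese cocycle $\eta$. Reconciling these requires that the $\alpha$-correction in different charts transforms compatibly with the transition functions on $\det_j$; this amounts to showing that on overlaps $U_{D,K}\cap U_{D',K'}$, the difference $\alpha_{D,K}-\alpha_{D',K'}$ is accounted for by the logarithmic derivative of the transition function of $\det_j$, so that all local operators assemble into a single global section of $\mathcal{A}_{\det_j}$. This cocycle-chasing in the category of Atiyah algebroids is the heart of the proof and is what forces the precise normalization of $\alpha$ in the statement.
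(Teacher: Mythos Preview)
The paper does not include a proof of this proposition; it is stated with attribution to Kontsevich and immediately followed by the next subsection. Your chart-by-chart verification is the standard argument and the bracket computation is correct: the constant term yields $-\eta_{D,K}(F,G)$, the linear-in-$A$ terms assemble into $-\tr([F,G]^{DK}A)$ via cyclicity, and the quadratic terms cancel, forcing $\alpha([F,G])=\eta_{D,K}(F,G)-\eta(F,G)$ exactly as in the statement.

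Two minor imprecisions are worth flagging. First, the paper records only that $H^2(\mathfrak{witt})$ is one-dimensional, not $H^2(\mathfrak{gl})$; existence of $\alpha$ is more cleanly argued by observing that $\eta$ and $\eta_{D,K}$ are Tate cocycles attached to commensurable polarizations and writing down the explicit coboundary. Second, your uniqueness clause (``up to a one-dimensional space of continuous $1$-cocycles'') conflicts with the statement, which asserts $\alpha$ is \emph{the unique} such $1$-cochain; this requires $\mathfrak{gl}=[\mathfrak{gl},\mathfrak{gl}]$, not a one-dimensional space of $1$-cocycles. Your identification of the chart-compatibility (matching $\alpha_{D,K}-\alpha_{D',K'}$ against the logarithmic derivative of the transition function of $\det_j$) as the substantive remaining step is accurate.
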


\subsection{The Krichever map and the Virasoro algebra} \label{krichever subsection}

The Krichever map is the geometrical map between the moduli space of triples $\mathcal{M}_{g,1^\infty}$ and the Sato Grassmannian $\Gr_j$. The compatibility of this geometrical map with the Lie algebras $\mathfrak{witt}$ and $\mathfrak{gl}$ and their respective actions allows us to properties of study line bundles on $\mathcal{M}_{g,1^\infty}$. In particular, we explain in this section that via the $\mathfrak{vir}$ action there exists a flat holomorphic connection on $\lambda_2\otimes \lambda_1^{-13}$.

The extra information contained in the puncture $p$ and parameter $z$, allows for a natural map from a Riemann surface into the Sato Grassmannian. The key observation is that the sections of any line bundle on $C$ taken over the curve minus the point $p$ are a discrete subspace expressed in the formal coordinate $z$.
\begin{definition}[{\textcite[Proposition 6.2]{Segal.Wilson.1985.lgaeoKt}}]
The Krichever map
\begin{align*}
	\kappa_j\co \mathcal{M}_{g,1^\infty}\to \Gr_j
\end{align*}
is defined by
\begin{align*}
    \kappa_j(C,p,z)\coeq Γ(C\setminus p,ω_C^{\otimes j}) \subset \mathbb{C}(\!(z)\!)\,dz^{\otimes j}
\end{align*}
where $ω_C\coeq \Omega_C^1$.
\end{definition}

On the algebraic side of the story, notice that the Witt algebra has natural interpretation as endomorphisms of $H_j\coeq \mathbb{C}(\!(z)\!)\,dz^{\otimes j}$ based on the Lie derivative. Since the Lie derivative action on $j$-differentials is different depending on the value of $j$, different representations of the Witt algebra are produced despite the fact that the $H_j$'s are isomorphic as topological vector spaces.
\begin{definition}\label{representations definition}
The formula
\begin{align*}
    \rho_j\co\mathfrak{witt}&\to \mathfrak{gl} = \mathfrak{gl}(H_j),\\
    f(z)\dd{z}&\mapsto \bigg(g(z)\,dz^{\otimes j}\mapsto \Big(f(z)g'(z)+jf'(z)g(z)\Big)\,dz^{\otimes j}\bigg)
\end{align*}
defines a natural representation\footnote{These representations form what is called the \emph{intermediate series}.} of the Witt algebra on $H_j$ for each $j \in \ZZ$.
\end{definition}

The appearance of the critical dimension $26 = 13 \cdot 2$ from the representation theory of the Virasoro algebra is essentially the calculation below for $j=2$.
\begin{prop}[{\textcite[(2.24)]{Arbarello.DeConcini.Kac.Procesi.1988.msocart}}]\label{pullback critical number}
The pullbacks along the representations $\rho_j$ satisfy
\begin{align*}
    \rho_j^*(η)=(6j^2-6j+1)\rho_1^*(η)
\end{align*}
\end{prop}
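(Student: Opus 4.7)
The plan is to compute both sides directly on the standard topological basis of $\witt$. Choose $e_k \coeq z^k\,dz^{\otimes j}$ for $k \in \ZZ$ as a basis of $H_j$, so that $H_j^+$ is spanned by $\{e_k : k \ge 0\}$ and $H_j^-$ by $\{e_k : k \le -1\}$. Applying the definition of $\rho_j$ to $L_m = -z^{m+1}\tdd{z}$ gives the matrix elements
\[
\rho_j(L_m)\,e_k = -\bigl(k + j(m+1)\bigr)\,e_{m+k},
\]
so $\rho_j(L_m)$ shifts indices by $m$.

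First I would show that $\rho_j^*(η)(L_m, L_n) = 0$ unless $m + n = 0$. Indeed, for $\rho_j(L_m)^{+-}$ to be nonzero one needs $k \le -1$ and $m + k \ge 0$, which forces $m \ge 1$; symmetrically $\rho_j(L_m)^{-+}$ requires $m \le -1$. Thus only pairs with $m > 0 > n$ (or vice versa) contribute, and each composition $\rho_j(L_n)^{-+}\rho_j(L_m)^{+-}$ preserves each basis vector $e_k$ only when the total index shift $m+n$ vanishes.

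Next, for $m > 0$ and $n = -m$, I would evaluate the only nonvanishing term:
\[
\tr\bigl(\rho_j(L_{-m})^{-+}\rho_j(L_m)^{+-}\bigr) = \sum_{k=-m}^{-1}\bigl(k + j(m+1)\bigr)\bigl(m + k + j(1-m)\bigr).
\]
Substituting $\ell = k + m$ and expanding, this becomes $\sum_{\ell=0}^{m-1}(\ell+a)(\ell+b)$ with $a = j(1-m)$, $b = j(m+1)-m$, so $a+b = 2j - m$. Applying $\sum_{\ell=0}^{m-1}\ell^2 = \tfrac{m(m-1)(2m-1)}{6}$ and $\sum_{\ell=0}^{m-1}\ell = \tfrac{m(m-1)}{2}$, one factors out $m(m-1)$ and observes that the remaining bracket $\tfrac{2m-1}{6} + \tfrac{2j-m}{2} - j(j(m+1)-m)$ reorganizes as $-\tfrac{(m+1)(6j^2 - 6j + 1)}{6}$. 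Combining,
\begin{align*}
\rho_j^*(η)(L_m, L_{-m}) = \frac{(6j^2 - 6j + 1)(m^3 - m)}{6},
\end{align*}
and the desired identity $\rho_j^*(η) = (6j^2 - 6j + 1)\rho_1^*(η)$ follows on comparing coefficients at $j=1$.

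The computation is essentially mechanical, so the main obstacle is purely bookkeeping: tracking signs, the ranges of summation dictated by the $\pm$ block decomposition, and the algebraic collapse to the polynomial $6j^2 - 6j + 1$. The crucial algebraic miracle is that after pulling out $m(m-1)$ from the sum, the remaining expression factors a second time as a multiple of $(m+1)$, producing the clean formula proportional to $m^3 - m$ with a factor depending only on $j$.
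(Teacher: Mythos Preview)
Your computation is correct and is exactly the standard direct verification. The paper itself gives no proof of this proposition: it is simply quoted from \textcite[(2.24)]{Arbarello.DeConcini.Kac.Procesi.1988.msocart}, so there is no in-paper argument to compare against. Your approach---evaluating $\rho_j^*(\eta)(L_m,L_{-m})$ on the monomial basis and reducing to the finite sum $\sum_{\ell=0}^{m-1}(\ell+a)(\ell+b)$---is the same method used in that reference.

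One small presentational point: you display $\tr(\rho_j(L_{-m})^{-+}\rho_j(L_m)^{+-})$ and then later write $\rho_j^*(\eta)(L_m,L_{-m})$ with the opposite sign, without making explicit that the Japanese cocycle contributes this term with a minus sign (since for $m>0$ the block $\rho_j(L_m)^{-+}$ vanishes and only the second term of $\eta$ survives). The algebra is right, but a reader could stumble over the silent sign flip between your displayed trace, which equals $-\tfrac{(6j^2-6j+1)(m^3-m)}{6}$, and the stated value of $\rho_j^*(\eta)(L_m,L_{-m})$.
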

Using this relationship we can establish representations of $\mathfrak{vir}$ on $\tilde{\mathfrak{gl}}$ as given by the central arrow in the commutative diagram below. The key observation is that $\rho_1^*(η)$ is the standard nontrivial cocycle which defines $\mathfrak{vir}$. Then by \cref{pullback critical number} the cocycle $\rho_j^*(η)$ is the $c_j = 6j^2-6j+1$ multiple of the standard cocycle.
\begin{equation}\label{vir representations}
    \begin{tikzcd}
    0\arrow{r}& \mathbb{C} \arrow{d}{\cdot c_j}\arrow{r}&\mathfrak{vir}\arrow{r}\arrow{d}{\tilde{\rho}_j}&\mathfrak{witt}\arrow{r}\arrow{d}{\rho_j}&0\\
    0\arrow{r}& \mathbb{C} \arrow{r}&\tilde{\mathfrak{gl}}\arrow{r}&\mathfrak{gl}\arrow{r}&0
    \end{tikzcd}
\end{equation}

We now return to the geometrical properties of the Krichever maps.

By construction based on the $j$-differentials in the Krichever maps $\kappa_j$ and the representations $\rho_j$, the differentials of the Krichever maps are compatible with the representations of $\mathfrak{witt}$.
\begin{equation*}
    \begin{tikzcd}
    \mathfrak{witt}\arrow{r}\arrow{d}{\rho_j}&Γ(\mathcal{M}_{g,1^\infty},\mathcal{T}_{\mathcal{M}_{g,1^\infty}})\arrow{d}{d\kappa_j}\\
    \mathfrak{gl}\arrow{r}&Γ(\mathcal{M}_{g,1^\infty},\kappa_j^*\mathcal{T}_{\Gr_j})
    \end{tikzcd}
\end{equation*}
Therefore, we can identify the moduli space of triples for a fixed genus as an orbit on $\Gr_j$ of the Witt algebra under the $\rho_j$ representation.

The definition of the Krichever map also naturally identifies the determinant line bundles $\lambda_j$ as the pullbacks of the determinant line bundle $\det_j$ on the Grassmannian.
\begin{prop}[\textcite{Kontsevich.1987.VaaTs}]
\label{kappa}
The pullback of the determinant line bundle from the Sato Grassmannian along $j$\textsuperscript{th} Krichever map is canonically isomorphic to $\lambda_j$.
\begin{align*}
    \kappa_j^*\det_j \cong \lambda_j
\end{align*}
\end{prop}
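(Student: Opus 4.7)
The plan is to identify both line bundles fiberwise using a Mayer-Vietoris computation of the sheaf cohomology of $\omega_C^{\otimes j}$, and then to verify that this identification is canonical enough to extend to the universal family.

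First, I would fix a point $(C,p,z) \in \MM_{g,1^\infty}$ and write $D \coeq \kappa_j(C,p,z) = Γ(C\setminus p, ω_C^{\otimes j})$. The affine cover $\{C\setminus p,\, \hat{U}_p\}$, where $\hat{U}_p \coeq \Spec \hat{\OO}_{C,p}$ is the formal disk at $p$, together with the local parameter $z$, gives canonical identifications $Γ(\hat{U}_p, ω_C^{\otimes j}) = H_j^+$ and $Γ(\hat{U}_p \setminus p, ω_C^{\otimes j}) = H_j$. The Čech complex for this cover is a four-term exact sequence
\begin{equation*}
0 \to H^0(C, ω_C^{\otimes j}) \to D \oplus H_j^+ \to H_j \to H^1(C, ω_C^{\otimes j}) \to 0,
\end{equation*}
which immediately yields $H^0(C, ω_C^{\otimes j}) = D \cap H_j^+$ and $H^1(C, ω_C^{\otimes j}) = H_j/(D+H_j^+)$, both finite-dimensional since $D$ is a discrete subspace of $H_j$.

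Taking determinants of this four-term sequence, standard linear algebra (or the Knudsen-Mumford formalism) produces a canonical isomorphism
\begin{equation*}
\det H^0(C, ω_C^{\otimes j}) \otimes \det H^1(C, ω_C^{\otimes j})^{-1} = \det(D \cap H_j^+) \otimes \det\bigl(H_j/(D+H_j^+)\bigr)^{-1},
\end{equation*}
whose left side is $\lambda_j|_{(C,p,z)}$ and whose right side is $\det_j|_D = (\kappa_j^* \det_j)|_{(C,p,z)}$.

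To upgrade this pointwise construction to an isomorphism of line bundles, I would work with the universal family $π\co X \to \MM_{g,1^\infty}$ and the divisor $P \subset X$ of marked points. The formal completion $\hat{X}_P$ is trivialized as a formal disk bundle by the universal parameter $z$, giving a relative Mayer-Vietoris resolution of $ω_{X/S}^{\otimes j}$ whose higher direct images fit into a four-term sequence of sheaves on the base. Applying Deligne's determinant of cohomology (as in the footnote to the determinant line bundle definition, which is needed precisely because the middle terms of this sequence fail to be locally free of finite rank) gives the isomorphism globally, and its fiberwise values reproduce the linear-algebraic identification above.

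The main obstacle is the last step: the middle terms $π_* ω_{(X\setminus P)/S}^{\otimes j}$ and $π_* ω_{(\hat{X}_P \setminus P)/S}^{\otimes j}$ are pro-objects (Tate $\OO_S$-modules) rather than ordinary coherent sheaves, so the individual $\det$'s are not literally defined and only the \emph{ratio} $\det(D\cap H_j^+)/\det(H_j/(D+H_j^+))$ makes sense. Verifying that the Knudsen-Mumford/Deligne formalism manipulates this commensurability-determinant consistently in families, and that the resulting isomorphism is independent of auxiliary choices (such as the compact subspace witnessing discreteness), is where the technical care of \cite{AlvarezVazquez.MunozPorras.PlazaMartin.1998.tafoseoabf} would be invoked.
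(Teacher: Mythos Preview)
The paper does not actually supply a proof of this proposition: it is stated with attribution to \textcite{Kontsevich.1987.VaaTs} and then used. Your argument is the standard one (Čech cohomology on the cover $\{C\setminus p,\hat{U}_p\}$ identifies $H^0(C,\omega_C^{\otimes j})\cong D\cap H_j^+$ and $H^1(C,\omega_C^{\otimes j})\cong H_j/(D+H_j^+)$, then take determinants), and it is correct. Your caveat about the family version is apt: the middle terms are Tate $\mathcal{O}_S$-modules, and one needs either the relative-determinant formalism of \cite{AlvarezVazquez.MunozPorras.PlazaMartin.1998.tafoseoabf} or an equivalent presentation of $\det_j$ via commensurable lattices to make the isomorphism global; this is exactly the literature the paper already leans on for the scheme structure of $\Gr_j$.
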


Combining the representations $\tilde{\rho}_j$ defined by \eqref{vir representations}, the geometrical compatibility of the Krichever maps above, and the action of $\tilde{\mathfrak{gl}}$ as in \cref{gl tilde action}, we arrive at the action on $\lambda_j$ as below.
\begin{prop}
The Virasoro algebra acts by first-order differential operators on $\lambda_j$
\begin{align*}
    \mathfrak{vir}\to Γ(\mathcal{M}_{g,1^\infty},\mathcal{A}_{\lambda_j})
\end{align*}
where the central generator $C$ maps to $c_j\in Γ(\mathcal{M}_{g,1^\infty},\mathcal{O}_{\mathcal{M}_{g,1^\infty}})$.
\end{prop}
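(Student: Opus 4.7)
The plan is to assemble the desired Virasoro action as a composition of three ingredients already established in the excerpt: the representation $\tilde{\rho}_j\co \mathfrak{vir}\to\tilde{\mathfrak{gl}}$ from diagram \eqref{vir representations}, the action of $\tilde{\mathfrak{gl}}$ on $\det_j$ from \cref{gl tilde action}, and the Krichever map $\kappa_j$ together with the identification $\kappa_j^*\det_j\cong\lambda_j$ of \cref{kappa}.

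First I would explain that a line-bundle identification $\kappa_j^*\det_j\cong\lambda_j$ induces an identification of Atiyah Lie algebroids $\kappa_j^*\mathcal{A}_{\det_j}\cong\mathcal{A}_{\lambda_j}$, compatible with the short exact sequence \eqref{symbol}: this is formal, since $\mathcal{A}_L$ is functorial in the pair $(X,L)$. In particular, pullback of global sections yields a map of Lie algebras
\begin{equation*}
\kappa_j^*\co \Gamma(\Gr_j,\mathcal{A}_{\det_j})\to \Gamma(\mathcal{M}_{g,1^\infty},\mathcal{A}_{\lambda_j}),
\end{equation*}
compatible with symbol maps and with the $\mathcal{O}$-submodule of scalars.

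Next I would form the composition
\begin{equation*}
\mathfrak{vir}\xrightarrow{\tilde{\rho}_j}\tilde{\mathfrak{gl}}\to \Gamma(\Gr_j,\mathcal{A}_{\det_j})\xrightarrow{\kappa_j^*}\Gamma(\mathcal{M}_{g,1^\infty},\mathcal{A}_{\lambda_j}),
\end{equation*}
which is a Lie algebra homomorphism since each factor is. To track the central element, note that by the left square of \eqref{vir representations} the generator $C\in\mathfrak{vir}$ is sent by $\tilde{\rho}_j$ to $c_j\cdot C\in\tilde{\mathfrak{gl}}$; by \cref{gl tilde action}, $C\in\tilde{\mathfrak{gl}}$ maps to the constant function $1\in\Gamma(\Gr_j,\mathcal{O}_{\Gr_j})\subset\Gamma(\Gr_j,\mathcal{A}_{\det_j})$; and $\kappa_j^*1=1$. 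Therefore $C$ acts on $\lambda_j$ as the constant $c_j=6j^2-6j+1$, as claimed.

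The only genuine point to check is that the Virasoro action on $\mathcal{M}_{g,1^\infty}$ by vector fields (\cref{witt action on moduli space}), composed into the symbol of the constructed action on $\lambda_j$, is the correct one, i.e.\ that the diagram of anchor maps commutes. This follows by combining the commutative square displayed just before \cref{kappa} (compatibility of the Witt action with $d\kappa_j$ via $\rho_j$) with the symbol-level statement of \cref{gl action} and the short exact sequence \eqref{symbol}; there is no cocycle obstruction at this stage because the comparison takes place modulo $\mathcal{O}$. The potential subtlety, and the main place where care is needed, is that the $1$-cochain $\alpha$ entering \cref{gl tilde action} depends on the chart data $(D,K)$, so one must verify that after pulling back and restricting to the image of $\tilde{\rho}_j$ the resulting differential operators patch across charts of $\mathcal{M}_{g,1^\infty}$; this is where one uses that $\rho_j^*\eta$ equals $c_j$ times the standard Virasoro cocycle (\cref{pullback critical number}), which guarantees that the cocycle discrepancies absorbed into $\alpha$ are precisely what is needed for $\tilde{\rho}_j$ to be a well-defined Lie algebra map into $\tilde{\mathfrak{gl}}$, and hence for the composition to land in $\Gamma(\mathcal{M}_{g,1^\infty},\mathcal{A}_{\lambda_j})$ independently of the local trivializations.
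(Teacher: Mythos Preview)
Your approach is exactly the one the paper indicates in the sentence preceding the proposition: compose $\tilde\rho_j$, the $\tilde{\mathfrak{gl}}$-action on $\det_j$, and the Krichever identification $\kappa_j^*\det_j\cong\lambda_j$, then read off the central charge from diagram \eqref{vir representations}. So the strategy is correct and matches the paper.

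One imprecision is worth flagging. The claim that $\kappa_j^*\mathcal{A}_{\det_j}\cong\mathcal{A}_{\lambda_j}$ ``since $\mathcal{A}_L$ is functorial in the pair $(X,L)$'' is not right as stated: for a general morphism $f\co X\to Y$, the $\mathcal{O}_X$-module pullback $f^*\mathcal{A}_L$ has anchor landing in $f^*\mathcal{T}_Y$, not $\mathcal{T}_X$, so it is not the Atiyah algebroid of $f^*L$, and there is no canonical Lie algebra map $\Gamma(Y,\mathcal{A}_L)\to\Gamma(X,\mathcal{A}_{f^*L})$. What actually produces the map on $\lambda_j$ is the equivariance datum you invoke later: for each $X\in\mathfrak{witt}$ the vector field on $\mathcal{M}_{g,1^\infty}$ from \cref{witt action on moduli space} is $\kappa_j$-related to the symbol of $\tilde L_{\rho_j(X)}$ (this is the square just before \cref{kappa}), and \emph{that} compatibility is what lets you transport the first-order operator across $\kappa_j$. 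So the commutative square is not an after-the-fact check but the mechanism that defines the map. Also, your final worry about patching the cochain $\alpha$ across charts is unnecessary: the global well-definedness of $\tilde{\mathfrak{gl}}\to\Gamma(\Gr_j,\mathcal{A}_{\det_j})$ is already the content of \cref{gl tilde action}, so nothing further is needed once you compose with $\tilde\rho_j$.
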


Using the value of the central charge action from \cref{pullback critical number}, we can see that on certain tensor products of line bundles the action of the central charge is by zero. For such a line bundle, the Virasoro action descends to a Witt algebra action.
\begin{lemma}
The Witt algebra acts by first-order differential operators on $\lambda_j\otimes\lambda_1^{-c_j}$.
\begin{align*}
    \mathfrak{witt}\to Γ(\mathcal{M}_{g,1^\infty},\mathcal{A}_{\lambda_j\otimes\lambda_1^{-c_j}})
\end{align*}
\end{lemma}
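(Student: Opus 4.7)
The strategy is to assemble the desired $\mathfrak{witt}$-action as the difference of two Virasoro actions whose central charges exactly cancel. First, the preceding proposition gives a Virasoro action
\begin{equation*}
    \mathfrak{vir}\to \Gamma(\mathcal{M}_{g,1^\infty},\mathcal{A}_{\lambda_j}),\qquad C\mapsto c_j,
\end{equation*}
and the special case $j=1$ of the same proposition gives
\begin{equation*}
    \mathfrak{vir}\to \Gamma(\mathcal{M}_{g,1^\infty},\mathcal{A}_{\lambda_1}),\qquad C\mapsto c_1 = 6-6+1 = 1.
\end{equation*}
Both actions project onto the same Witt action of \cref{witt action on moduli space} under the symbol map \eqref{symbol}, because they are constructed from the same vertical arrow $\rho_j$ on the right of \eqref{vir representations} via the same recipe (Krichever pullback of the $\tilde{\mathfrak{gl}}$-action).

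Next I would invoke the standard fact that first-order differential operators on line bundles are additive under tensor product and sign-reversing under duality. Explicitly, given actions $X\mapsto (D_X,d_X)$ on $L$ and $X\mapsto (D_X,d'_X)$ on $L'$ with a common symbol $D_X$, the formulas $X\mapsto (D_X,d_X+d'_X)$ and $X\mapsto (D_X,-d_X)$ define actions on $L\otimes L'$ and $L^{-1}$ respectively; this follows directly from the Leibniz rule for the tensor product of sections and a routine local check on the short exact sequence \eqref{symbol}. Applying this to the two Virasoro actions above and the $(-c_j)$-th tensor power yields an action
\begin{equation*}
    \mathfrak{vir}\to \Gamma(\mathcal{M}_{g,1^\infty},\mathcal{A}_{\lambda_j\otimes\lambda_1^{-c_j}})
\end{equation*}
under which the central generator maps to $c_j + (-c_j)\cdot c_1 = c_j - c_j = 0$.

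Since the central element acts as zero, the action factors through the quotient $\mathfrak{vir}/\mathbb{C}C = \mathfrak{witt}$, producing the desired Lie algebra homomorphism. The only step requiring any care is the compatibility of symbols needed to form the tensor product action, which is automatic here because both Virasoro actions project to the same Witt action on the moduli space; the main calculation, namely the vanishing of the central charge on $\lambda_j\otimes\lambda_1^{-c_j}$, is forced by the normalization $c_j = 6j^2-6j+1$ with $c_1=1$.
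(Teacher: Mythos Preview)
Your proposal is correct and follows exactly the paper's approach: the sentence immediately preceding the lemma is the paper's entire proof, namely that on the tensor product $\lambda_j\otimes\lambda_1^{-c_j}$ the central charges $c_j$ and $-c_j\cdot c_1=-c_j$ cancel, so the $\mathfrak{vir}$-action descends to $\mathfrak{witt}$. Your write-up just spells out the standard additivity of first-order operators under tensor/dual and the factoring through $\mathfrak{vir}/\mathbb{C}C$; one small wording slip is that the two actions do not come from ``the same vertical arrow $\rho_j$'' (the maps $\rho_j$ and $\rho_1$ differ), but rather have the same symbol because each Krichever map $\kappa_j$ is $\mathfrak{witt}$-equivariant for the intrinsic action of \cref{witt action on moduli space}.
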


This Witt algebra action on the moduli space of triples satisfies: the action is surjective on the tangent space to the moduli space of triples, and the kernel of the action is a perfect Lie algebra as stated in \cref{perfect lemma}. From these two properties, the line bundles $\lambda_j\otimes\lambda_1^{-c_j}$ are seen to be holomorphically flat.

\begin{theorem}[{\textcite{Kontsevich.1987.VaaTs}; \textcite{Arbarello.DeConcini.Kac.Procesi.1988.msocart}}]
There exists a flat holomorphic connection on the line bundle $\lambda_j\otimes\lambda_1^{-c_j}$.
\end{theorem}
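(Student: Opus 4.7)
The plan is to construct a flat holomorphic connection on $L \coloneqq \lambda_j \otimes \lambda_1^{-c_j}$ as a Lie-algebra splitting $\nabla \co \mathcal{T}_{\MM_{g,1^\infty}} \to \mc A_L$ of the symbol sequence \eqref{symbol}, by descending the Witt action on $L$ along the surjective Witt action on the tangent sheaf of $\MM_{g,1^\infty}$.

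First, I would organize Proposition~\ref{witt action on moduli space} as a surjective morphism of sheaves of Lie algebras $\witt \mathrel{\hat{\otimes}} \mc O_{\MM_{g,1^\infty}} \twoheadrightarrow \mathcal{T}_{\MM_{g,1^\infty}}$, whose kernel is the $\mc O$-submodule $\mathfrak{K}$ with stalk $\mathfrak{k}(C,p,z) = \Gamma(C \setminus p,\mathcal{T}_C)$ at a point $(C,p,z)$. The Witt action on $L$ provided by the preceding lemma then extends to a morphism $\tilde\rho_j \co \witt \mathrel{\hat\otimes} \mc O \to \mc A_L$ lifting the identity on the tangent sheaf, that is, fitting into a commutative diagram of short exact sequences whose right-hand column is the identity, top row is the displayed sequence above, and bottom row is \eqref{symbol}.

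Next, I would restrict $\tilde\rho_j$ to $\mathfrak{K}$. Commutativity of the right square forces the image to lie in $\mc O \subset \mc A_L$, giving a morphism of sheaves of Lie algebras $\mathfrak{K} \to \mc O$ into an abelian target. Such a morphism factors through the fiberwise abelianization of $\mathfrak{K}$, which vanishes at every point by \cref{perfect lemma}. Hence $\tilde\rho_j|_{\mathfrak{K}} = 0$, and $\tilde\rho_j$ descends to the desired splitting $\nabla$.

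Flatness is then automatic: $\nabla$ is obtained as the descent of a morphism of sheaves of Lie algebras along a surjection of sheaves of Lie algebras, hence is itself a morphism of sheaves of Lie algebras, which is exactly the defining property of a flat connection on $L$. The main obstacle I anticipate is the first step, namely the sheaf-theoretic bookkeeping that puts a natural sheaf-of-Lie-algebras structure on the topological tensor product $\witt \mathrel{\hat\otimes} \mc O_{\MM_{g,1^\infty}}$ so that the surjection is a Lie algebra morphism and so that $\tilde\rho_j$ extends $\mc O$-linearly; with that formalism in place, the perfectness argument reduces to a one-line pointwise calculation and flatness needs no further argument.
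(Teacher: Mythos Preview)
Your proposal is correct and follows essentially the same approach as the paper: the paper's one-sentence sketch preceding the theorem invokes exactly the two ingredients you use---surjectivity of the Witt action on the tangent sheaf and perfectness of the stabilizer (\cref{perfect lemma})---to descend the $\witt$-action on $L$ to a splitting of the Atiyah sequence. The only point worth flagging is that your passage from ``fiberwise abelianization vanishes at every point'' to $\tilde\rho_j|_{\mathfrak K}=0$ implicitly uses that $\mathfrak K/[\mathfrak K,\mathfrak K]$ vanishes as an $\mc O$-module; the paper makes this step explicit later (in the proof of \cref{locally-perfect}) via Nakayama's lemma.
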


\section{The universal Mumford form on a Witt-algebra orbit} 
\label{mumford on witt orbit section}

\subsection{The Witt and Virasoro action Lie algebroids}
\label{algebroids}

% The Atiyah Lie algebroid discussed above is a particular example of a Lie algebroid. Lie algebroids are the natural generalization of Lie algebras to families, and therefore they carry a sheaf structure. %the many-body generalization of Lie algebras, and therefore naturally arise when considering actions of Lie algebras on geometrical spaces. 
In this subsection, we define an action Lie algebroid. Lie algebroids are the natural generalization of Lie algebras to families and therefore carry a sheaf structure. 
%Note that \cref{krichever subsection} can be read without understanding action Lie algebroids since they do not appear in the text, however the results in \cref{mumford on witt orbit section} are written using the concept of an action Lie algebroid.

\begin{definition}[Action Lie algebroid] \label{action lie algebroid}
Let $\mathfrak{g}$ be a Lie algebra acting on a variety $M$ via an
 infinitesimal action map $\mathfrak{g}\to Γ(TM)$, $X \mapsto
 \xi_X$. Then $\mathfrak{g}\times M\to M$ is a Lie algebroid with
 $(X,m)\mapsto \xi_X (m)$ as the anchor and the bracket defined
 pointwise by
\begin{equation*}
 [X,Y](m) \coeq [X(m),Y(m)]_{\mathfrak{g}} + \xi_X Y (m)-\xi_YX (m),
\end{equation*}
where a section $X$ of $\mathfrak{g} \times M \to M$ is identified
with a function $X\co M \to \mathfrak{g}$.
\end{definition}

\begin{prop}[{\textcite[Theorem 2.4]{KosmannSchwarzbach.Mackenzie.2002.doaaoLa}}]
    Consider the action Lie algebroid $\mathcal{G}$ associated to a Lie algebra morphism $ϕ \co \mathfrak{g} \to
Γ( S , T S )$. Separately consider a line bundle $L$ over $S$. There is a bijection between:
\begin{itemize}
    \item Lie algebra morphisms $\mathfrak{g} \to Γ( S , \mathcal{A}_L )$,
    \item Lie algebroid morphisms $\mathcal{G} \to \mathcal{A}_L$,
\end{itemize}
where $\mathcal{A}_L$ is the Atiyah Lie algebroid of $L$.
\end{prop}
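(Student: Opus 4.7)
The plan is to construct explicit inverse maps between the two sets and verify they preserve the relevant algebraic structures. Sections of the action Lie algebroid $\mathcal{G} = \mathfrak{g}\times S$ are $\OO_S$-linear combinations of constant sections, so a Lie algebroid morphism out of $\mathcal{G}$ is determined by its restriction to constant sections; this observation drives the entire argument.

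First I would define the forward map. Given a Lie algebra morphism $\psi\co \mathfrak{g}\to \Gamma(S,\mathcal{A}_L)$ (implicitly compatible with $\phi$, since any such map extending to a Lie algebroid morphism must satisfy $\sym\circ\psi=\phi$ by the anchor axiom), extend it to $\tilde{\psi}\co \Gamma(\mathcal{G})\to \Gamma(\mathcal{A}_L)$ by $\tilde{\psi}(fX)\coeq f\,\psi(X)$ for $f\in \OO_S$ and $X\in\mathfrak{g}$. The anchor condition $\sym\circ \tilde{\psi}=\phi\circ(\cdot)$ then follows automatically from the hypothesis $\sym\circ \psi = \phi$. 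The bracket condition is the main computation: using the Leibniz rule in the Atiyah algebroid bracket,
\begin{equation*}
 [f\psi(X),g\psi(Y)]_{\mathcal{A}_L} = fg\,\psi([X,Y]_\mathfrak{g}) + f(\phi(X)g)\,\psi(Y) - g(\phi(Y)f)\,\psi(X),
\end{equation*}
which one matches term by term with $\tilde{\psi}$ applied to
\begin{equation*}
 [fX,gY]_{\mathcal{G}} = fg\,[X,Y]_\mathfrak{g} + f\phi(X)(g)\,Y - g\phi(Y)(f)\,X,
\end{equation*}
as dictated by \cref{action lie algebroid}.

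For the inverse map, I would assign to any Lie algebroid morphism $\tilde{\psi}\co \mathcal{G}\to\mathcal{A}_L$ the restriction $\psi(X) \coeq \tilde{\psi}(X^{\mathrm{const}})$ where $X^{\mathrm{const}}$ is the constant section with value $X\in\mathfrak{g}$. Since for constant sections the two correction terms $\xi_X Y$ and $\xi_Y X$ in the bracket formula vanish, the bracket of constant sections in $\mathcal{G}$ equals $[X,Y]_\mathfrak{g}^{\mathrm{const}}$, so $\psi$ is a Lie algebra morphism; the anchor compatibility of $\tilde{\psi}$ then gives $\sym\circ\psi=\phi$.

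Finally, I would verify the two constructions are mutually inverse: starting from $\psi$, forming $\tilde{\psi}$, and restricting to constants recovers $\psi$; conversely, starting from $\tilde{\psi}$, restricting to constants, and extending $\OO_S$-linearly reproduces $\tilde{\psi}$, precisely because $\Gamma(\mathcal{G})$ is generated over $\OO_S$ by constant sections and $\tilde{\psi}$ is $\OO_S$-linear. The only delicate point is the Leibniz computation above, which is the place where the nontrivial content of \cref{action lie algebroid}—namely the inclusion of the derivative terms in the bracket on $\mathcal{G}$—matches exactly with the Leibniz rule in $\mathcal{A}_L$; this is where I expect to spend the most care, but it is a direct symbolic verification rather than a conceptual obstacle.
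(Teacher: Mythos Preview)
The paper does not supply its own proof of this proposition; it is quoted as Theorem~2.4 of Kosmann-Schwarzbach and Mackenzie and used as a black box. Your argument is the standard one and is correct: restrict to constant sections in one direction, extend $\OO_S$-linearly in the other, and match the Leibniz terms in the two brackets.

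One point you handle correctly but which deserves to be said out loud: the bijection as literally stated in the paper is only true if the Lie algebra morphisms $\psi\co\mathfrak{g}\to\Gamma(S,\mathcal{A}_L)$ are required to lift $\phi$, i.e.\ $\sym\circ\psi=\phi$. You note this as ``implicitly compatible,'' which is exactly right---without it the $\OO_S$-linear extension would not intertwine the anchors and would therefore not be a Lie algebroid morphism. In the paper's applications (the Virasoro and Witt actions on $\det_j$) this compatibility is built in, so the omission is harmless there, but your proof is more precise than the statement it proves.
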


We construct two action Lie algebroids over the Sato Grassmannian $\Gr_j$.

\begin{definition}
       According to the construction described in \cref{action lie algebroid}, define the \emph{Witt action Lie algebroid} $\mathcal{W}_{\Gr_j}$  based on the Lie algebra morphism 
       \begin{equation*}
           \begin{tikzcd}
               \mathfrak{witt} \arrow{r}{ρ_j} & \mathfrak{gl} \arrow{r}{L} &  Γ(\Gr_j,\mathcal{T}_{\Gr_j}),
           \end{tikzcd}
       \end{equation*}
       which is the composition of the representation of $\witt$ in \cref{representations definition} with the action of $\mathfrak{gl}$ by vector fields in \cref{gl action}.

        Similarly, define the \emph{Virasoro action Lie algebroid} $\mathcal{V}_{\Gr_j}$  based on the Lie algebra morphism 
        \begin{equation*}
           \begin{tikzcd}
               \mathfrak{vir} \arrow{r}{\tilde{ρ}_j} & \tilde{\mathfrak{gl}} \arrow{r}{\tilde{L}} &  Γ(\Gr_j,\mathcal{A}_{\det_j}) \arrow{r}{\sym} & Γ(\Gr_j,\mathcal{T}_{\Gr_j}),
           \end{tikzcd}
       \end{equation*}
        which is the composition of the representation defined by Diagram \cref{vir representations} with the action of $\tilde{\mathfrak{gl}}$ by first-order differential operators in \cref{gl tilde action} and the symbol map \eqref{symbol} of the Atiyah Lie algebroid.
\end{definition}

We may summarize the action of the Wit and Virasoro action Lie algebroids on the Sato Grassmannian into one diagram:
\begin{equation*}
% \label{AtiyahSES}
    \begin{tikzcd}
    0\arrow{r}&\mathcal{O}_{\Gr_j}\arrow{r}\arrow{d}{\cdot c_j} &\mathcal{V}_{\Gr_j}\arrow{r}\arrow{d}&\mathcal{W}_{\Gr_j}\arrow{r}\arrow{d}&0\\
    0\arrow{r}&\mathcal{O}_{\Gr_j}\arrow{r}&\mathcal{A}_{\det_j}\arrow{r}&\mathcal{T}_{\Gr_j}\arrow{r}&0.
    \end{tikzcd}
\end{equation*}
The action of the center as multiplication by $c_j=6j^2-6j+1$ follows from the result in \cref{pullback critical number} about the representations $ρ_j$.

\subsection{A flat holomorphic connection}

Consider the product $\dGr$ of two Sato Grassmannians, with natural projections $p_1\co \dGr \to \Gr_1$ and $p_2\co \dGr \to \Gr_2$. Recall the \emph{external tensor product of vector bundles} $E_1$ over $\Gr_1$ and $E_2$ over $\Gr_2$:
\begin{equation*}
E_2 \boxtimes E_1 \coeq p_2^* (E_2) \otimes p_1^* (E_1) .
\end{equation*}
\begin{prop}
\label{Witt-equiv}
The line bundle $\det_2 \boxtimes \det_1^{-13}$ on $\dGr$ is $\mathfrak{witt}$-equivariant.
\end{prop}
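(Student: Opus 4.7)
The plan is to exploit the central-charge cancellation $c_2 - 13 c_1 = 13 - 13 = 0$, given by \cref{pullback critical number}, to lift the Witt action on $\dGr$ to a first-order differential operator action on the line bundle $\det_2 \boxtimes \det_1^{-13}$. The Witt action on $\dGr$ we want to lift is the diagonal one: for $X \in \witt$, it sends $X$ to the vector field $p_2^* L_{\rho_2(X)} + p_1^* L_{\rho_1(X)}$ in the notation of \cref{gl action}, so that its image in $\Gamma(\Gr_j, \mathcal{T}_{\Gr_j})$ under $dp_j$ is the $\rho_j$-action.

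First I would pull back the Virasoro action Lie algebroid $\mathcal{V}_{\Gr_j}$ from \cref{algebroids} along each projection $p_j \co \dGr \to \Gr_j$. Since the Atiyah algebroid of a pulled-back line bundle is itself a pullback of Lie algebroids, this yields a $\vir$-action on $p_j^* \det_j$ over $\dGr$ whose symbol is the pullback of the $\rho_j$-Witt vector field action. The central element $C$ acts by the scalar $c_j = 6j^2 - 6j + 1$ on $p_j^* \det_j$.

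Second I would combine the two pulled-back $\vir$-actions via the Leibniz rule for first-order differential operators on tensor products, using the dual action on the negative tensor power $(p_1^* \det_1)^{\otimes(-13)}$. The result is a single $\vir$-action on
\begin{equation*}
\det_2 \boxtimes \det_1^{-13} = p_2^* \det_2 \otimes (p_1^* \det_1)^{\otimes(-13)},
\end{equation*}
whose symbol is the diagonal Witt vector field action on $\dGr$ and for which $C$ acts as the scalar $c_2 - 13 c_1 = 0$. Because $C$ acts trivially, the $\vir$-action factors through $\witt = \vir/\CC C$, giving a Lie algebra homomorphism $\witt \to \Gamma(\dGr, \mathcal{A}_{\det_2 \boxtimes \det_1^{-13}})$ that lifts the Witt action on $\dGr$. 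By definition, this is a $\witt$-equivariant structure.

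The substantive content is just the central-charge cancellation $c_2 = 13 \cdot c_1$; the main obstacle is bookkeeping. One must verify that pullback of Virasoro Atiyah-algebroid actions along $p_j$ behaves as claimed, and that the Leibniz combination on an external tensor product adds central charges with the expected sign on negative tensor powers. For a clean presentation I would organize the entire argument at the level of a commutative diagram of action Lie algebroids over $\dGr$, parallel to the diagram at the end of \cref{algebroids}, and then quotient by the center to pass from $\vir$ to $\witt$.
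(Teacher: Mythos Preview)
Your proposal is correct and follows essentially the same approach as the paper: both arguments use the $\vir$-action on each $\det_j$ with central charge $c_j$, combine them on the external tensor product so that $C$ acts by $c_2 - 13 c_1 = 0$, and conclude that the action factors through $\witt$, organized via a commutative diagram of action Lie algebroids over $\dGr$. The paper's proof is terser about the pullback and Leibniz bookkeeping you flag, but the substance is identical.
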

\begin{proof}
Using the representations $\tilde{\rho}_j\co \mathfrak{vir}\to \tilde{\mathfrak{gl}}$ as in \cref{vir representations} and the action of $\tilde{\mathfrak{gl}}$ as in \cref{gl central extension action}, there is a natural action of $\mathfrak{vir}$ by first-order differential operators on $\det_j$.

Under the representation $\tilde{\rho}_j$ of $\mathfrak{vir}$, the central charge acts by $c_j\inΓ(\Gr_j,\mathcal{O}_{\Gr_j})$ on the line bundle $\det_j$. In particular, for $j=1$, the action of $\mathfrak{vir}$ on $\det_1$ has central charge $1\inΓ(\Gr_1,\mathcal{O}_{\Gr_1})$, and therefore the central charge acts by $-c_j\inΓ(\Gr_1,\mathcal{O}_{\Gr_1})$ on the line bundle $\det_1^{-c_j}$. In particular, the central charges for $\det_j$ and $\det_1^{-c_j}$ cancel on $\det_j  \boxtimes \det_1^{-c_j}$. Thus, for $j=2$, we have a commutative diagram of Lie algebroids, with the dashed arrow lifting an action of the Lie algebra $\mathfrak{witt}$ from $\dGr$ to the line bundle $\det_2 \boxtimes \det_1^{-13}$:

\begin{equation}
\label{doubleAtiyah}
    \begin{tikzcd}
    0\arrow{r}&\mathcal{O}_{\dGr}\arrow{r}\arrow{d}{\cdot 0} &\mathcal{V}_{\dGr}\arrow{r}\arrow{d}&\mathcal{W}_{\dGr}\arrow{r}\arrow{d}{\alpha} \arrow[dashed]{dl}&0 \\ 0\arrow{r}&\mathcal{O}_{\dGr}\arrow{r}&\mathcal{A}_{\det_2 \boxtimes \det_1^{-13}}\arrow{r}&\mathcal{T}_{\dGr}\arrow{r}&0. 
    \end{tikzcd}
\end{equation}

\vspace*{-2em} \qedhere

\end{proof}

The Krichever maps $\kappa_2\co \mathcal{M}_{g,1^\infty}\to \Gr_2$ and $\kappa_1\co \mathcal{M}_{g,1^\infty}\to \Gr_1$ define a ``diagonal'' Krichever map
\begin{equation*}
(\kappa_2, \kappa_1)\co \mathcal{M}_{g,1^\infty}\to \Gr_2 \times \Gr_1,
\end{equation*}
which is $\mathfrak{witt}$-equivariant, and thereby, its image is a $\witt$-orbit, which we call the \emph{moduli space orbit} or \emph{locus}.

\begin{theorem}
\label{main}
The line bundle $\det_2 \boxtimes \det_1^{-13}$ over a $\witt$-orbit near the moduli space locus on $\dGr$ has a flat holomorphic connection.
\end{theorem}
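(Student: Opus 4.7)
My plan is to adapt the Kontsevich--Arbarello--De Concini--Kac--Procesi argument, which produced a flat holomorphic connection on $\lambda_2 \otimes \lambda_1^{-13}$ over $\MM_{g,1^\infty}$, to the present setting where the base is a $\witt$-orbit $\mathcal{O} \subset \dGr$ near the moduli space locus rather than $\MM_{g,1^\infty}$ itself. Set $L = \det_2 \boxtimes \det_1^{-13}$. The starting point is the lifted Lie algebroid morphism $\alpha\co \mathcal{W}_{\dGr} \to \mathcal{A}_L$ produced by \cref{Witt-equiv} (the dashed arrow in \eqref{doubleAtiyah}). My goal is to descend its restriction $\alpha|_{\mathcal{O}}$ through the anchor $\mathcal{W}_{\mathcal{O}} \twoheadrightarrow \mathcal{T}_{\mathcal{O}}$ to obtain a Lie-algebra splitting $\nabla\co \mathcal{T}_{\mathcal{O}} \to \mathcal{A}_L|_{\mathcal{O}}$ of the Atiyah sequence; such a splitting is precisely a flat holomorphic connection on $L|_{\mathcal{O}}$.

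The first step is to observe that the anchor $\mathcal{W}_{\mathcal{O}} \to \mathcal{T}_{\mathcal{O}}$ is surjective by the very definition of a $\witt$-orbit. Writing $\mathcal{K}$ for its kernel gives an exact sequence
\begin{equation*}
0 \to \mathcal{K} \to \mathcal{W}_{\mathcal{O}} \to \mathcal{T}_{\mathcal{O}} \to 0.
\end{equation*}
Because $\alpha$ covers the anchor, the restriction $\alpha|_{\mathcal{K}}$ takes values in $\ker(\sym) = \mathcal{O}_{\mathcal{O}}$, which is an abelian Lie subalgebra of $\mathcal{A}_L$ (elements act as multiplication by functions and commute among themselves). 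Hence $\alpha|_{\mathcal{K}}$ is a Lie algebra morphism from $\mathcal{K}$ to an abelian sheaf of Lie algebras, and it factors through $\mathcal{K}/[\mathcal{K},\mathcal{K}]$. If $\mathcal{K}$ is perfect, $\alpha|_{\mathcal{K}}$ vanishes, and the universal property of the quotient $\mathcal{T}_{\mathcal{O}} = \mathcal{W}_{\mathcal{O}}/\mathcal{K}$ yields the desired splitting $\nabla$; because $\alpha$ is a morphism of Lie algebroids, $\nabla$ is automatically bracket-preserving, hence flat.

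The principal obstacle is the perfectness of $\mathcal{K}$. At a point $(C,p,z)$ of the moduli space locus, the fiber of $\mathcal{K}$ is exactly the stabilizer $\Gamma(C\setminus p,\mathcal{T}_C)$, which is perfect by \cref{perfect lemma}; this is precisely the input used classically for $\lambda_2\otimes\lambda_1^{-13}|_{\MM_{g,1^\infty}}$. To extend the argument off the moduli space locus, I would argue that perfectness is an open condition along an orbit: near the moduli space locus, the stabilizer is still controlled by formal vector fields that extend away from a formal puncture in Krichever-type data, so the proof of \cref{perfect lemma} carries over by a semicontinuity/deformation argument. This is the step most deserving of scrutiny, and it is the reason the theorem is restricted to orbits \emph{near} the moduli space locus rather than to arbitrary $\witt$-orbits on $\dGr$.

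Assuming this perfectness, the conclusion is automatic: $\alpha|_{\mathcal{K}}=0$ produces $\nabla$ as a Lie algebroid splitting of $\mathcal{A}_L|_{\mathcal{O}}\to\mathcal{T}_{\mathcal{O}}$, equivalently a flat holomorphic connection on $L|_{\mathcal{O}}$. In the affine charts $U_{D,K}$ of \cref{models}, one can in principle write $\nabla$ explicitly by composing the formula for $\tilde{L}_F$ from \cref{gl tilde action} with $\tilde{\rho}_2\oplus(-\tilde{\rho}_1)$ on the two factors, and checking directly that the central charges $13$ and $-13$ cancel; flatness then follows either from the Lie algebroid argument above or from a direct curvature computation in coordinates.
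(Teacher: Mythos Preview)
Your outline matches the paper's proof almost step for step: restrict the $\witt$-equivariant structure on $\det_2 \boxtimes \det_1^{-13}$ to an orbit $M$, use surjectivity of the anchor $\mathcal{W}_M \to \mathcal{T}_M$, observe that the kernel $\mathcal{K}$ lands in the abelian subalgebra $\mathcal{O}_M \subset \mathcal{A}_{\mathcal{L}}$, and conclude by showing $\mathcal{K}$ is perfect so that the lift descends to a Lie-algebroid splitting $\mathcal{T}_M \to \mathcal{A}_{\mathcal{L}}$.

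The one substantive gap is exactly where you flag it: perfectness of $\mathcal{K}$ off the moduli locus. Your proposed mechanism---that ``the proof of \cref{perfect lemma} carries over'' because the stabilizer is ``still controlled by formal vector fields that extend away from a formal puncture in Krichever-type data''---does not work as stated: at a generic point of $\dGr$ there is no Riemann surface, hence no global meromorphic vector fields to invoke. The paper's argument is different and more robust. Since the anchor vanishes on the stabilizer sheaf $\mathcal{K} \subset \mathcal{W}_{\dGr}$, the action-algebroid bracket on $\mathcal{K}$ is $\mathcal{O}_{\dGr}$-bilinear, so $\mathcal{K}/[\mathcal{K},\mathcal{K}]$ is an honest $\mathcal{O}_{\dGr}$-module. \Cref{perfect lemma} says its fiber at a moduli-space point vanishes, and Nakayama's lemma (applicable without the usual finiteness hypothesis because the Grassmannian is locally a limit of finite-dimensional affine spaces, cf.\ \cref{models}) then forces vanishing on a neighborhood. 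This is the precise ``semicontinuity'' your sketch was reaching for.
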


\begin{proof}
Let $M$ be a $\witt$-orbit in $\dGr$ and $\mathcal{L}$ be the restriction of the line bundle $\det_2 \boxtimes \det_1^{-13}$ to $M$.
By \cref{algebroids}, we need to find a Lie-algebroid splitting of the short exact sequence
\begin{equation*}
0 \to \mathcal{O}_{M}\to \mathcal{A}_{\mathcal{L}}\to \mathcal{T}_{M} \to 0,
\end{equation*}
that is to say, a Lie-algebroid morphism $\mathcal{T}_{M} \to \mathcal{A}_{\mathcal{L}}$.

On the orbit $M$, we have the following diagram, built on diagram \Cref{doubleAtiyah},
\begin{equation}
\label{orbitAtiyah}
    \begin{tikzcd}
        &&&0\arrow{d}&\\
    &&&\mathcal{K}_M\arrow{d}&\\
    &&&\mathcal{W}_{M}\arrow{r}\arrow{d}{\alpha_M} \arrow[dashed]{dl}&0\\
0\arrow{r}&\mathcal{O}_{M}\arrow{r}&\mathcal{A}_{\mathcal{L}}\arrow{r}[near end]{\beta} &\mathcal{T}_{M}\arrow{r}\arrow{d}&0\\
        &&&0
    \end{tikzcd}
\end{equation}
with exact row and column, where $\alpha_M$ is surjective by definition and $\mathcal{K}_M \coeq \ker \alpha_M$. All arrows are morphisms of Lie algebroids. To split $\beta$, it is enough to show that $\mathcal{K}_M$ maps to 0 in $\mathcal{A}_{\mathcal{L}}$. Since $\mathcal{K} = \ker \alpha_M$, the image of $\mathcal{K}_M$ in $\mathcal{A}_{\mathcal{L}}$ is contained in $\mathcal{O}_{M}$. The Lie algebroid $\mathcal{O}_{M}$ is abelian, therefore, it is enough to show $\mathcal{K}_M$ is perfect, \emph{i.e}., $\mathcal{K}_M = [\mathcal{K}_M, \mathcal{K}_M]$. For a $\witt$-orbit sufficiently close to the moduli space orbit, this fact is a consequence of the following lemma.

\begin{lemma}
\label{locally-perfect}
The stabilizer $\mathcal{K}_M$ of a $\witt$-orbit $M$, in a neighborhood sufficiently close to a point of the moduli space locus in $\Gr_2 \times \Gr_1$,  is perfect.
\end{lemma}

\begin{proof}[Proof of Lemma]
 The question is local on $\Gr_2 \times \Gr_1$, so we will need to take diagram \cref{orbitAtiyah} back there:
 \begin{equation*}
    \begin{tikzcd}
        &&&0\arrow{d}&\\
    &&&\mathcal{K}\arrow{d}&\\
    &&&\mathcal{W}_{\Gr_2 \times \Gr_1}\arrow{r}\arrow{d}{\alpha} \arrow[dashed]{dl}&0\\
0\arrow{r}&\mathcal{O}_{\Gr_2 \times \Gr_1}\arrow{r}&\mathcal{A}_{\det_2 \boxtimes \det_1^{-13}}\arrow{r}
{\beta} &\mathcal{T}_{\Gr_2 \times \Gr_1}\arrow{r}&0.
    \end{tikzcd}
\end{equation*}
Here, as before, $\mathcal{K} \coeq \ker \alpha$ is the sheaf of stabilizers of the Lie-algebra action of $\mathfrak{witt}$ on $\Gr_2 \times \Gr_1$. Since $\alpha$, being an anchor, is a morphism of vector bundles, $\mathcal{K}$ is a sheaf of $\mathcal{O}_{\Gr_2 \times \Gr_1}$-modules. The vanishing of the anchor map on $\mathcal{K}$ implies that the bracket on it is $\mathcal{O}_{\Gr_2 \times \Gr_1}$-linear. Therefore, the derived Lie algebroid $[\mathcal{K},\mathcal{K}]$ and the quotient $\mathcal{K}/[\mathcal{K},\mathcal{K}]$ are $\mathcal{O}_{\Gr_2 \times \Gr_1}$-modules.

On the other hand, \Cref{perfect lemma} states that the fiber $\mathcal{K}(m)$ of $\mathcal{K}$ over a point $m$ of the moduli space locus in $\Gr_2 \times \Gr_1$ is perfect, \emph{i.e}., $\mathcal{K}(m)/[\mathcal{K}(m),\mathcal{K}(m)] = 0$. This yields $\mathcal{K}/[\mathcal{K},\mathcal{K}] = 0$ locally near $m$ by Nakayama's lemma. Nakayama's lemma is applicable without the standard finiteness condition,
because the Sato Grassmannian is locally a limit of finite-dimensional affine spaces via affine morphisms, see the remark after \cref{models}.
Thus, on a $\witt$-orbit $M$, in an open neighborhood sufficiently close to $m$, we have $\mathcal{K}_M / [\mathcal{K}_M, \mathcal{K}_M] = 0$.
Lemma is proven.
\end{proof} 
\let\qed\relax
\end{proof}

\subsection{The local universal Mumford form}

\Cref{main} justifies the following definition, which defines a universal Mumford form on a $\witt$-orbit locally. Note that the original Mumford form, which is a section establishing the Mumford isomorphism of \Cref{Mumford-iso}, is defined up to a constant factor only globally over the moduli space $\mathcal{M}_g$. This is because of the computation $Γ(\mathcal{M}_g, \mathcal{O}^*_{\mathcal{M}_g}) = \CC^*$, see \cite{Mumford.1977.sopv}. Locally on $\mathcal{M}_g$, the Mumford isomorphism and form are defined only up to an invertible function. The universal Mumford form defined below on a $\witt$-orbit is defined locally up to a constant factor. In particular, the universal Mumford form is defined on the moduli space orbit $M = \mathcal{M}_{g,1^\infty}$ locally up to a constant factor.

\begin{definition}
A \emph{universal Mumford form} $\mu_M$  on a $\witt$-orbit $M$ sufficiently close to the moduli space locus in $\dGr$ is a local horizontal section of $\mathcal{L} = (\det_2 \boxtimes \det_1^{-13})|_M$.
\end{definition}

The construction of this section justifies the product $\Gr_2 \times \Gr_1$ of Sato Grassmannians as a universal moduli space. The idea of including all the moduli spaces $\MM_g$ within a ``universal moduli space'' originated in the work \cite{Friedan.Shenker.1986.tiagoqs}. \textcite{Manin.1986.cdotstatdsotmsosc} opened the door to using the Sato Grassmannian as the universal moduli space by conjecturing its relevance to the Virasoro algebra and KP hierarchy. The Sato Grassmannian was explicitly suggested as a universal moduli space in \cite{AlvarezGaume.Gomez.Reina.1987.lgGast, Kontsevich.1987.VaaTs, Morozov.1987.statsoums, Vafa.1987.ofoRs, Arbarello.DeConcini.Kac.Procesi.1988.msocart}. \textcite{Schwarz.1987.tfsaaums, Schwarz.1989.fsaums} proposed a certain subspace of the Grassmannian as the universal moduli space and constructed a super Mumford form on it in the fermionic case.

\begin{rem}[Descent to $\MM_g$] The moduli space orbit $\MM_{g,1^\infty}$ is infinite dimensional but admits a natural forgetful projection to the finite-dimensional moduli space $\MM_g$:
\begin{equation*}
p\co \MM_{g,1^\infty} \to \MM_g,
\end{equation*}
compatible with the tensor products of the determinant line bundles:
\begin{equation*}
(\det_2 \boxtimes \det_1^{-13})|_{\MM_{g,1^\infty}}  \cong p^* (\lambda_2 \otimes \lambda_1^{-13}),
\end{equation*}
just because $\kappa_j^* \det_j \cong p^* \lambda_j$. The flat holomorphic connection on $p^* (\lambda_2 \otimes \lambda_1^{-13})$ induces a flat holomorphic connection on $\lambda_2 \otimes \lambda_1^{-13}$. The local universal Mumford form $\mu_M$ on the $\witt$-orbit $M = \MM_{g,1^\infty}$, being horizontal with respect to the flat holomorphic connection, is a pullback of a horizontal local section $\mu$ of a flat holomorphic connection on $\lambda_2 \otimes \lambda_1^{-13}$ over $\MM_g$:
\begin{equation*}
\mu_M = p^* \mu.
\end{equation*}
The original Mumford, which is itself defined up to a constant factor, is also horizontal with respect to this flat holomorphic connection and thereby coincides with $\mu$ everywhere where $\mu$ is defined, up to a constant factor.
\end{rem}

\section{A formula for the Mumford form}
\label{expformula}

Let $M$ be a Witt-algebra orbit on $\dGr$, $m \in M$ be a point on the orbit sufficiently close to a point on the moduli space locus, $\K_m = \ker \alpha_m \subset \witt$ be its stabilizer, and $\{ \DD_1, \DD_2, \dots \}$ a basis of the quotient $\witt/\K_m$. Locally, in the formal neighborhood of $m$ on $M$, one can use formal logarithmic coordinates $t_1, t_2, \dots$:
\begin{equation*}
\exp \left(\sum_{n=1}^\infty t_n \DD_n\right) m.
\end{equation*}
The Mumford form $\mu_M$ may be expressed as
\begin{equation*}
\mu_M (t_1, t_2, \dots)  = \exp \left(\sum_{n=1}^\infty t_n \DD_n\right) \mu_m,
\end{equation*}
where $\mu_m$ is a nonzero vector in the fiber of the line bundle $\det_2 \boxtimes \det_1^{-13}$ over $m$. This expression is well-defined, because the elements of $\K_m$ act trivially on the fiber of this line bundle over $m$. This follows from \Cref{locally-perfect}. Thus, the Mumford form $\mu_M$ is defined locally uniquely up to a constant factor. Note that the classical Mumford form $\mu$ on the moduli space $\MM_g$ is defined globally uniquely up to a constant factor. This follows from the computation $H^0(\MM_g, \OO_{\MM_g}) = \CC$, see \cite{Mumford.1977.sopv}.

By construction, in the formal neighborhood of the point $m$, the formal variables $t_1, t_2, \ldots$ form a system of formal coordinates. However, it might be useful to employ simpler ``coordinates,'' changing along the vector fields $L_n = -z^{n+1}\dd{z}$, quotation marks emphasizing the variables $s_n$ are not independent: $\dots, s_{-1}, s_0, s_1, \dots$ A point in the Witt-orbit $M$ is given by
\begin{equation*}
\exp \left(\sum_{n=-\infty}^\infty s_n L_n\right) m,
\end{equation*}
whereas the Mumford form may be expressed as
\begin{equation*}
\mu_M (\dots, s_{-1}, s_0, s_1, \dots)  = \exp \left(\sum_{n=-\infty}^\infty s_n L_n\right) \mu_m.
\end{equation*}

\section{The super Mumford form}

 In the fermionic case, when we deal with the supermoduli space $\MMM_g$ on the one hand and the super Grassmannian on the other, the constructions and results are quite parallel to the bosonic case, even though they are not straightforward generalizations of their bosonic counterparts. Here is a dictionary of the basic notions, known since \cite{Manin.1986.cdotstatdsotmsosc, Manin.1988.NSsadefMs}. One may find details of the super column in \cite{Maxwell.2022.tsMfaSG}. The super Heisenberg group $Γ$ may be defined by its functor of points, similarly to \cite{AlvarezVazquez.MunozPorras.PlazaMartin.1998.tafoseoabf, MunozPorras.PlazaMartin.2008.eoHsitiG} in the bosonic case.

\begin{table}[H]
\begin{tabular}{llll}
\hline
\multicolumn{2}{c}{\textbf{Classical case}} &   \multicolumn{2}{c}{\textbf{Super case}}\\
\hline
 \hline
\rowcolor{lightgray} algebraic curve & $C$ &  super Riemann surface & $\Sigma$ \\
moduli space &$\MM_g$ & supermoduli space &$\MMM_g$ \\
\rowcolor{lightgray} Mumford isomorphism &$\lambda_2 \cong \lambda_1^{13}$ & super Mumford isomorphism & $\lambda_{3/2} = \lambda_{1/2}^{5}$ \\
Witt algebra &$\witt$  & super Witt algebra &$\switt$\\
\rowcolor{lightgray} Virasoro algebra &$\vir$ & Neveu-Schwarz algebra &$\ns$ \\
formal Laurent series & $H_j$ & formal super Laurent series & $H_{j/2}$\\
\rowcolor{lightgray} Heisenberg group &$Γ$ &  super Heisenberg group &$Γ$  \\
Grassmannian & $\Gr_j$ & super Grassmannian &  $\Gr_{j/2}$\\
determinant line bundle & $\det_j$ & Berezinian line bundle & $\Ber_{j/2}$\\
\rowcolor{lightgray}universal moduli space &$\Gr_2 \times \Gr_1$ & universal supermoduli space &$\Gr_{3/2} \times \Gr_{1/2}$   \\
Mumford form bundle&$\det_2\boxtimes\det^{-13}_1$ & super Mumford form bundle&$\Ber_{3/2}\boxtimes\Ber^{-5}_{1/2}$\\
\hline
\end{tabular}
\caption{The classical/super dictionary.}
\end{table}

Here we outline the super versions of the main results of \cref{mumford on witt orbit section} and \cref{expformula}. The proofs of the statements below are direct generalizations of the nonsuper results.

\begin{prop}
The line bundle $\Ber_{3/2}\boxtimes\Ber^{-5}_{1/2}$ on $\sdGr$ is $\switt$-equivariant.
\end{prop}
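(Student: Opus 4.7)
The plan is to replay the proof of \Cref{Witt-equiv} in the super setting, substituting every classical object by its super counterpart as enumerated in the dictionary table above, with the precise super definitions supplied by \cite{Maxwell.2022.tsMfaSG}. First, I would invoke the super analogue of Diagram~\eqref{vir representations}: representations $\tilde{\rho}_{j/2}$ of the Neveu-Schwarz algebra $\ns$ in a central extension of the super general linear Lie superalgebra acting on $H_{j/2}$, lifting the natural representation $\rho_{j/2}$ of $\switt$ coming from the super Lie derivative on $j/2$-superdifferentials. Composing with the action of that central extension by first-order differential operators on the Berezinian line bundle $\Ber_{j/2}$ over $\Gr_{j/2}$ (the super analogue of \cref{gl tilde action}) then yields an action of $\ns$ by first-order differential operators on $\Ber_{j/2}$.

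Second, I would compute the central charge at which the central element of $\ns$ acts on $\Ber_{j/2}$. The super analogue of \cref{pullback critical number} provides a pullback formula for the super Japanese cocycle, yielding a super critical constant $c_{j/2}$ with $c_{1/2}=1$ and $c_{3/2}=5$; the value $c_{3/2}=5$ is precisely the exponent appearing in the super Mumford isomorphism $\lambda_{3/2}\cong\lambda_{1/2}^{5}$ recorded in the dictionary. It follows that the central element of $\ns$ acts on $\Ber_{3/2}$ as multiplication by $5$ and on $\Ber_{1/2}^{-5}$ as multiplication by $-5$, and these contributions cancel on the external tensor product $\Ber_{3/2}\boxtimes\Ber_{1/2}^{-5}$ over $\sdGr$.

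Third, I would package this into the super version of diagram~\eqref{doubleAtiyah}. Define the NS and super Witt action Lie algebroids $\mathcal{V}_{\sdGr}$ and $\mathcal{W}_{\sdGr}$ on $\sdGr$ by repeating \cref{algebroids} in the super setting, using $\ns$, $\switt$, and the preceding paragraph in place of $\vir$, $\witt$, and the classical inputs. The vanishing of the central charge on $\Ber_{3/2}\boxtimes\Ber_{1/2}^{-5}$ then produces the dashed Lie-algebroid arrow from $\mathcal{W}_{\sdGr}$ to $\mathcal{A}_{\Ber_{3/2}\boxtimes\Ber_{1/2}^{-5}}$ lifting the anchor, which is exactly the $\switt$-equivariance claimed.

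The main obstacle is verificational rather than conceptual: one must check that the super versions of \cref{gl tilde action} and \cref{pullback critical number} survive the introduction of the $\ZZ/2$-grading with the correct sign conventions, and, in particular, that the super critical constant takes the value $c_{3/2}=5$ matching the super Mumford exponent rather than some other number. These checks are carried out in \cite{Maxwell.2022.tsMfaSG} and supply the only nontrivial input to the argument sketched above.
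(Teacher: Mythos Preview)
Your proposal is correct and matches the paper's approach exactly: the paper does not give a separate proof but simply states that the super results are ``direct generalizations of the nonsuper results,'' and your outline is precisely that generalization, replaying the proof of \Cref{Witt-equiv} with the substitutions from the classical/super dictionary and the super inputs from \cite{Maxwell.2022.tsMfaSG}. If anything, your write-up is more explicit than the paper's own, which defers all details to the bosonic case and the cited reference.
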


\begin{theorem}
\label{fmain}
The line bundle $\Ber_{3/2}\boxtimes\Ber^{-5}_{1/2}$ over an $\switt$-orbit near the supermoduli space locus on $\sdGr$ has a flat holomorphic connection.
\end{theorem}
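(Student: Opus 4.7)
The plan is to mirror the proof of \Cref{main} in the super setting, step by step along the dictionary provided by the authors. First, I would set up the super analogues of the Witt and Virasoro action Lie algebroids of \Cref{algebroids}, namely an $\switt$-action Lie algebroid $\mathcal{SW}_{\Gr_{j/2}}$ and an $\ns$-action Lie algebroid $\mathcal{NS}_{\Gr_{j/2}}$ on each super Sato Grassmannian, built from intermediate-series representations $\rho_{j/2}\co \switt \to \fk{gl}(H_{j/2})$ and from the action of the central extension $\tilde{\fk{gl}}(H_{j/2})$ on $\Ber_{j/2}$ by first-order differential operators. The key numerical input is the super analogue of \Cref{pullback critical number}: the pullback of the Japanese cocycle along $\rho_{j/2}$ is a constant multiple $c_{j/2}$ of the standard $\ns$-cocycle, and by the super Mumford isomorphism $\lambda_{3/2} \cong \lambda_{1/2}^{5}$ recorded in the dictionary, one has $c_{3/2}/c_{1/2} = 5$. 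With this, the central-charge cancellation from the proof of \Cref{Witt-equiv} gives a lift of the $\switt$-action from $\sdGr$ to $\Ber_{3/2} \boxtimes \Ber_{1/2}^{-5}$, yielding the super analogue of diagram \eqref{doubleAtiyah}.

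Next, for an $\switt$-orbit $M \subset \sdGr$ passing through a point $m$ near the supermoduli space locus, set $\mathcal{L} \coeq (\Ber_{3/2} \boxtimes \Ber_{1/2}^{-5})|_M$ and restrict the lift to produce the super analogue of diagram \eqref{orbitAtiyah}, with stabilizer sheaf $\mathcal{K}_M \coeq \ker \alpha_M$. A flat holomorphic connection on $\mathcal{L}$ is the same data as a Lie-algebroid splitting $\mathcal{T}_M \to \mathcal{A}_{\mathcal{L}}$. By the diagram chase from the proof of \Cref{main}, such a splitting exists as soon as the composition $\mathcal{K}_M \to \mathcal{A}_{\mathcal{L}}$ vanishes, and since this composition lands in the abelian Lie algebroid $\mathcal{O}_M$, it suffices to show that $\mathcal{K}_M$ is perfect near $m$.

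The main obstacle, exactly as in the bosonic case, is the super analogue of \Cref{locally-perfect}. The formal argument is identical to the one given there: the anchor $\alpha$ is a morphism of vector bundles, so $\mathcal{K}$ is a sheaf of $\mathcal{O}_{\sdGr}$-modules on which the bracket is $\mathcal{O}_{\sdGr}$-linear, whence $\mathcal{K}/[\mathcal{K},\mathcal{K}]$ is itself an $\mathcal{O}_{\sdGr}$-module. Nakayama's lemma applies because the super Sato Grassmannian is locally an inverse limit of finite-dimensional affine superschemes via affine morphisms, just as in \Cref{models}. This reduces perfectness of $\mathcal{K}_M$ near $m$ to the pointwise fact that, for a super Riemann surface $\Sigma$ with a marked point $p$, the Lie superalgebra $\Gamma(\Sigma \setminus p,\, \mathcal{T}_\Sigma)$ is perfect. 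This is the super analogue of \Cref{perfect lemma} and is the genuinely nontrivial input; I would cite it from \cite{Maxwell.2022.tsMfaSG}, where the NS-flow setup for the super Mumford form is developed and this perfectness statement is established. Granted it, Nakayama forces $\mathcal{K}_M/[\mathcal{K}_M,\mathcal{K}_M] = 0$ on a neighborhood of $m$ inside $M$, the desired splitting exists, and $\mathcal{L}$ acquires a flat holomorphic connection.
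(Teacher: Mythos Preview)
Your proposal is correct and follows the same route as the paper, which states explicitly that the super proofs are direct generalizations of the bosonic ones and likewise invokes \cite{Maxwell.2022.tsMfaSG} for the perfectness of the stabilizer on the supermoduli locus, with the Nakayama argument of \Cref{locally-perfect} carrying over verbatim. One small refinement: the stabilizer at a super Riemann surface point is the Lie superalgebra of \emph{superconformal} vector fields on $\Sigma\setminus p$, not the full $\Gamma(\Sigma\setminus p,\mathcal{T}_\Sigma)$, since $\switt$ itself consists only of superconformal vector fields on the punctured formal superdisk.
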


\begin{lemma}
The stabilizer $\mathcal{K}_M$ of an $\switt$-orbit $M$, in a neighborhood sufficiently close to a point of the supermoduli space locus in $\sdGr$,  is perfect.
\end{lemma}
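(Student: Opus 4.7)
The plan is to follow the same argument as in the proof of \cref{locally-perfect} for the bosonic case, transporting every step to the super setting. The key inputs are (i) a super analog of the diagram \cref{orbitAtiyah} constructed from the $\switt$-action on $\sdGr$ and its lift to $\Ber_{3/2}\boxtimes\Ber^{-5}_{1/2}$, (ii) the perfectness of the stabilizer fiber at a point of the supermoduli space locus, and (iii) Nakayama's lemma applied locally on $\sdGr$.

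First, I would spell out the diagram. Using the $\switt$-action Lie algebroid $\mathcal{W}_{\sdGr}$ and its Virasoro-like counterpart $\mathcal{V}_{\sdGr}$ associated to the Neveu-Schwarz algebra $\ns$, together with the preceding proposition that $\Ber_{3/2}\boxtimes\Ber^{-5}_{1/2}$ is $\switt$-equivariant, I obtain an anchor map $\alpha\co \mathcal{W}_{\sdGr} \to \mathcal{T}_{\sdGr}$ whose kernel $\mathcal{K}\coeq \ker \alpha$ is the sheaf of stabilizers. Exactly as in the bosonic case, since $\alpha$ is an $\mathcal{O}_{\sdGr}$-linear map of vector bundles, $\mathcal{K}$ is a sheaf of $\mathcal{O}_{\sdGr}$-modules, and the vanishing of the anchor on $\mathcal{K}$ forces the bracket on $\mathcal{K}$ to be $\mathcal{O}_{\sdGr}$-linear, so both $[\mathcal{K},\mathcal{K}]$ and $\mathcal{K}/[\mathcal{K},\mathcal{K}]$ are $\mathcal{O}_{\sdGr}$-modules.

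Second, I would verify the pointwise perfectness at a supermoduli point. At a point of the supermoduli locus corresponding to a triple $(\Sigma,p,z)$, the fiber $\mathcal{K}(m)$ is the Lie subalgebra $\Gamma(\Sigma\setminus p, \mathcal{T}_\Sigma) \subset \switt$ of super vector fields holomorphic off $p$. This is the super counterpart of \cref{perfect lemma}, and the super-analog is established by the first author in \cite{Maxwell.2022.tsMfaSG}; I would simply cite it. (If needed, the argument proceeds by expressing a given super vector field as a bracket using the same style of explicit residue/duality calculation as in \textcite{Arbarello.DeConcini.Kac.Procesi.1988.msocart}, adapted to the Berezinian and the odd directions.)

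Finally, I would close by Nakayama. Knowing that $\mathcal{K}(m)/[\mathcal{K}(m),\mathcal{K}(m)] = 0$ at $m$ in the supermoduli locus, Nakayama's lemma yields $\mathcal{K}/[\mathcal{K},\mathcal{K}] = 0$ in a neighborhood of $m$ in $\sdGr$; its restriction to an $\switt$-orbit $M$ through such a neighborhood then gives $\mathcal{K}_M / [\mathcal{K}_M,\mathcal{K}_M] = 0$. Nakayama applies without the usual finiteness hypothesis, because the super Sato Grassmannian is likewise locally presented as an inverse limit of finite-dimensional (super)affine spaces along affine transition maps, just as noted after \cref{models}.

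The main obstacle is step two, the perfectness of $\Gamma(\Sigma\setminus p, \mathcal{T}_\Sigma)$ as a Lie subalgebra of $\switt$: the bosonic proof rests on a residue-pairing argument that in the super case must accommodate both even and odd generators of $\switt$ and the $\mathbb{Z}/2$-graded brackets, and the argument must be tight enough that every super vector field off $p$ — including the odd ones, which generate the Neveu-Schwarz sector — is exhibited as a commutator. Once this pointwise input is in hand, the sheaf-theoretic portion of the proof is a verbatim transcription of the bosonic argument.
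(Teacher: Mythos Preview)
Your proposal is correct and follows essentially the same approach as the paper, which states that the proof of \cref{locally-perfect} carries over verbatim to the super case: set up the stabilizer sheaf $\mathcal{K}$ on $\sdGr$, use $\mathcal{O}$-linearity of the bracket on $\ker\alpha$, invoke fiberwise perfectness at a supermoduli point (the super analog of \cref{perfect lemma}, supplied by \cite{Maxwell.2022.tsMfaSG}), and conclude by Nakayama's lemma using the local pro-finite-dimensional structure of the super Grassmannian. One small precision: the stabilizer fiber at $(\Sigma,p,z)$ consists of the \emph{superconformal} vector fields on $\Sigma\setminus p$ (those preserving the odd distribution defining the super Riemann surface structure), not the full $\Gamma(\Sigma\setminus p,\mathcal{T}_\Sigma)$.
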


Our semicontinuity/Nakayama's lemma approach to the proof of the bosonic version of this lemma, \cref{locally-perfect}, which goes through verbatim in the fermionic case, gives a new, shorter proof of the particular case of this lemma which deals with the supermoduli locus and which was proven originally in \cite[Proposition 3.6]{Maxwell.2022.tsMfaSG}. The original proof used a superconformal generalization of the Noether normalization lemma to a family of affine super Riemann surfaces \cite[Lemma 3.5]{Maxwell.2022.tsMfaSG}.

\cref{fmain} allows us to define a universal super Mumford form on an $\switt$-orbit in a similar way as in the bosonic case.

\begin{definition}
A \emph{universal super Mumford form} $\mu_M$  on an $\switt$-orbit $M$ sufficiently close to the supermoduli space locus in $\sdGr$ is a local horizontal section of $\mathcal{L} = (\Ber_{3/2}\boxtimes\Ber^{-5}_{1/2})|_M$.
\end{definition}

We also have an exponential formula for the universal super Mumford form, analogous to one in \cref{expformula}:
\begin{equation*}
\mu_M (\dots, s_{-1}, s_0, s_1, \dots | \dots, \sigma_{-1/2}, \sigma_{1/2}, \sigma_{3/2}, \dots )  = \exp \left(\sum_{n \in \ZZ} s_n L_n + \sum_{r \in \tfrac{1}{2} + \ZZ} \sigma_{r} G_r \right) \mu_m,
\end{equation*}
where $m$ is a point in $\sdGr$ sufficiently close to the supermoduli space locus, $s_n$'s are even variables and $\sigma_r$'s are odd variables along the $\switt$ orbit of $m$, and
\begin{align*}
L_n & = -z^{n+1} \frac{\partial}{\partial z} -\frac{n+1}{2} z^{n} ζ \frac{\partial}{\partial ζ}, & n \in \ZZ,\\
G_r & = -\frac{1}{2}z^{r+\tfrac{1}{2}} \left( ζ \frac{\partial}{\partial z} - \frac{\partial}{\partial ζ} \right), & r \in \frac{1}{2} + \ZZ,
\end{align*}
is the standard basis of the super Witt algebra, see \cite{Kac.Leur.1989.ocosa} and, for example, \cite[Definition 3.8]{Maxwell.2022.tsMfaSG}.

\printbibliography

@InProceedings{AlvarezVazquez.MunozPorras.PlazaMartin.1998.tafoseoabf,
  author     = {Álvarez Vázquez, A. and Muñoz Porras, J. M. and Plaza Martín, F. J.},
  booktitle  = {taller de variedades abelianas y funciones theta},
  date       = {1998},
  title      = {the algebraic formalism of soliton equations over arbitrary base fields},
  eprint     = {alg-geom/9606009},
  eprinttype = {arxiv},
  eventdate  = {1996},
  eventtitle = {an international workshop},
  isbn       = {968-36-7310-4},
  number     = {13},
  pages      = {3--40},
  publisher  = {Sociedad Matemática Mexicana},
  series     = {Aportaciones Mat. Investig.},
  venue      = {Morelia, México},
  mrnumber   = {1781698},
  msc2010    = {14M15 14F10},
  zbl        = {0995.14021},
}

@Article{Arbarello.DeConcini.Kac.Procesi.1988.msocart,
  author       = {Arbarello, E. and De Concini, C. and Kac, V. G. and Procesi, C.},
  date         = {1988},
  title        = {moduli spaces of curves and representation theory},
  doi          = {10.1007/bf01228409},
  issn         = {0010-3616},
  journaltitle = {Communications in Mathematical Physics},
  mrnumber     = {946992},
  number       = {1},
  pages        = {1--36},
  shortjournal = {Commun. Math. Phys.},
  volume       = {117},
  zbl          = {0647.17010},
  msc2010      = {17B66 58B25 32G15 17B56},
  publisher    = {Springer, Berlin/Heidelberg},
}

@Article{Beilinson.Shechtman.1988.dbaVa,
  author       = {Beĭlinson, A. A. and Shechtman, V. V.},
  date         = {1988},
  title        = {determinant bundles and Virasoro algebras},
  doi          = {10.1007/bf01221114},
  issn         = {0010-3616},
  journaltitle = {Communications in Mathematical Physics},
  language     = {},
  mrnumber     = {962493},
  number       = {4},
  pages        = {651--701},
  shortjournal = {Commun. Math. Phys.},
  volume       = {118},
  zbl          = {0665.17010},
  msc2010      = {17B65 14H10 81T40},
  publisher    = {Springer, Berlin/Heidelberg},
}

@Mparticle{DHoker.Phong.2002.tls,
  author      = {D'Hoker, Eric and Phong, D. H.},
  title       = {two-loop superstrings},
  parts       = {7},
  related     = {DHoker.Phong.2002.tlsI,DHoker.Phong.2002.tlsII,DHoker.Phong.2002.tlsIII,DHoker.Phong.2002.tlsIV,DHoker.Phong.2005.tlsV,DHoker.Phong.2005.tlsVI,DHoker.Phong.2008.tlsVII},
  relatedtype = {multipart},
  sortyear    = {2002},
}

@Article{Grushevsky.2009.ssaihg,
  author       = {Grushevsky, Samuel},
  date         = {2009},
  title        = {superstring scattering amplitudes in higher genus},
  doi          = {10.1007/s00220-008-0635-x},
  issn         = {0010-3616},
  journaltitle = {Communications in Mathematical Physics},
  mrnumber     = {2481758},
  number       = {2},
  pages        = {749--767},
  shortjournal = {Commun. Math. Phys.},
  volume       = {287},
  zbl          = {1173.81016},
  msc2010      = {81T30 14K25 81T60},
  publisher    = {Springer, Berlin/Heidelberg},
}

@InProceedings{Kac.Leur.1989.ocosa,
  author     = {Kac, V. G. and van de Leur, J. W.},
  booktitle  = {Strings '88},
  date       = {1989},
  title      = {on classification of superconformal algebras},
  eventdate  = {1988},
  eventtitle = {a workshop},
  isbn       = {9971-50-766-8},
  pages      = {77--106},
  publisher  = {World Sci. Publ.},
  venue      = {University of Maryland},
  mrnumber   = {1119604},
  msc2010    = {17B68 81R10 81T40},
  zbl        = {0938.17500},
}

@Article{Kawamoto.Namikawa.Tsuchiya.Yamada.1988.grocftoRs,
  author       = {Kawamoto, Noboru and Namikawa, Yukihiko and Tsuchiya, Akihiro and Yamada, Yasuhiko},
  date         = {1988},
  title        = {geometric realization of conformal field theory on Riemann surfaces},
  doi          = {10.1007/bf01225258},
  issn         = {0010-3616},
  journaltitle = {Communications in Mathematical Physics},
  mrnumber     = {939049},
  number       = {2},
  pages        = {247--308},
  shortjournal = {Commun. Math. Phys.},
  volume       = {116},
  zbl          = {0648.35080},
  msc2010      = {35Q51 35A30 58B20 81T40 37K40},
  publisher    = {Springer, Berlin/Heidelberg},
}

@Article{Kontsevich.1987.VaaTs,
  author       = {Kontsevich, M. L.},
  date         = {1987},
  title        = {Virasoro algebra and Teichmüller spaces},
  doi          = {10.1007/bf01078034},
  issn         = {0016-2663},
  journaltitle = {Functional Analysis and its Applications},
  number       = {1-3},
  pages        = {156--157},
  shortjournal = {Funct. Anal. Appl.},
  volume       = {21},
  zbl          = {0647.58012},
  location     = {New York, NY},
  msc2010      = {58D05 58B20 17B68 22E65},
  publisher    = {Springer US},
  related      = {Kontsevich.1987.АВипсТ},
  relatedtype  = {translationof},
}

@InProceedings{KosmannSchwarzbach.Mackenzie.2002.doaaoLa,
  author     = {Kosmann-Schwarzbach, Y. and Mackenzie, K. C. H.},
  booktitle  = {quantization, Poisson brackets and beyond},
  date       = {2002},
  title      = {differential operators and actions of Lie algebroids},
  doi        = {10.1090/conm/315/05482},
  eventdate  = {2001},
  eventtitle = {the London Mathematical Society Regional Meeting and Workshop on Quantization, Deformations, and New Homological and Categorical Methods in Mathematical Physics},
  location   = {the University of Manchester},
  number     = {315},
  pages      = {213--233},
  publisher  = {AMS},
  series     = {Contemp. Math.},
  venue      = {University of Manchester},
  mrnumber   = {1958838},
  msc2010    = {17B66 22A22 58D19 58H05},
  zbl        = {1040.17020},
}

@Article{Manin.1988.NSsadefMs,
  author       = {Manin, Yu. I.},
  date         = {1988},
  title        = {Neveu-Schwarz sheaves and differential equations for Mumford superforms},
  doi          = {10.1016/0393-0440(88)90002-2},
  issn         = {0393-0440},
  journaltitle = {Journal of Geometry and Physics},
  mrnumber     = {1029425},
  number       = {2},
  pages        = {161--181},
  shortjournal = {J. Geom. Phys.},
  volume       = {5},
  zbl          = {0701.58006},
  msc2010      = {58A50 81T30},
  publisher    = {Elsevier (North-Holland), Amsterdam},
}

@Article{Manin.1986.cdotstatdsotmsosc,
  author       = {Manin, Yu. I.},
  date         = {1986},
  title        = {critical dimensions of the string theories and the dualizing sheaf on the moduli space of (super) curves},
  doi          = {10.1007/bf01078480},
  issn         = {0016-2663},
  journaltitle = {Functional Analysis and its Applications},
  mrnumber     = {868568},
  pages        = {244--246},
  shortjournal = {Funct. Anal. Appl.},
  volume       = {20},
  zbl          = {0639.14015},
  msc2010      = {14H15 81T08 58D30 14H10},
  publisher    = {Springer US, New York, NY},
}

@Article{Mulase.1991.ansKsaacotJoaasc,
  author       = {Mulase, Motohico},
  date         = {1991},
  title        = {a new super KP system and a characterization of the Jacobians of arbitrary algebraic super curves},
  doi          = {10.4310/jdg/1214447538},
  issn         = {0022-040X},
  journaltitle = {Journal of Differential Geometry},
  mrnumber     = {1139643},
  number       = {3},
  pages        = {651--680},
  shortjournal = {J. Differ. Geom.},
  volume       = {34},
  zbl          = {0725.58002},
}

@Article{Mumford.1977.sopv,
  author       = {Mumford, David},
  date         = {1977},
  title        = {stability of projective varieties},
  doi          = {10.1007/978-1-4757-4265-7_7},
  issn         = {0013-8584},
  journaltitle = {L'Enseignement Mathématique},
  mrnumber     = {450272},
  pages        = {39--110},
  series       = {2},
  shortjournal = {Enseign. Math.},
  volume       = {23},
  zbl          = {0363.14003},
  msc2010      = {14D20 14C05 14H10},
  publisher    = {European Mathematical Society},
}

@Article{Sato.1981.seadsoaidGm,
  author       = {Sato, Mikio},
  date         = {1981},
  title        = {soliton equations as dynamical systems on an infinite dimensional Grassmann manifolds},
  journaltitle = {\NoCaseChange{数理解析研究所講究録}},
  msc          = {37J35 37K10 81Q05},
  pages        = {30--46},
  url          = {http://hdl.handle.net/2433/102800},
  volume       = {439},
  zbl          = {0507.58029},
}

@Article{Segal.Wilson.1985.lgaeoKt,
  author          = {Segal, Graeme and Wilson, George},
  date            = {1985},
  file            = {:Segal & Wilson - 1985 - loop groups and equations of KdV type.pdf:PDF},
  title           = {loop groups and equations of KdV type},
  doi             = {10.1007/bf02698802},
  issn            = {0073-8301},
  journalsubtitle = {publications mathématiques},
  journaltitle    = {Institut des Hautes Études Scientifiques},
  mrnumber        = {783348},
  number          = {61},
  pages           = {5--65},
  shortjournal    = {Inst. Hautes Études Sci. publ. math.},
  zbl             = {0592.35112},
  msc2010         = {35Q53 37K20 14M15},
}

@Article{Voronov.1988.afftMmist,
  author       = {Voronov, A. A.},
  date         = {1988},
  title        = {a formula for the Mumford measure in superstring theory},
  doi          = {10.1007/bf01077608},
  issn         = {0374-1990},
  journaltitle = {Functional Analysis and its Applications},
  mrnumber     = {947611},
  number       = {2},
  pages        = {139--140},
  shortjournal = {Funct. Anal. Appl.},
  volume       = {22},
  zbl          = {0688.14025},
  msc2010      = {14H15 58C50 81T30 32G15 14H10 14D20},
  publisher    = {Springer Science and Business Media {LLC}},
}

@Article{Witten.2019.nosRsatm,
  author       = {Witten, Edward},
  date         = {2019},
  title        = {notes on super Riemann surfaces and their moduli},
  doi          = {10.4310/pamq.2019.v15.n1.a2},
  issn         = {1558-8599},
  journaltitle = {Pure and Applied Mathematics Quarterly},
  mrnumber     = {3946083},
  number       = {1},
  pages        = {57--211},
  shortjournal = {Pure Appl. Math. Q.},
  volume       = {15},
  zbl          = {1423.32012},
  msc2010      = {32C11 81T30},
}

@Article{Schwarz.1987.tfsaaums,
  author       = {Schwarz, A. S.},
  date         = {1987},
  title        = {the fermion string and a universal modulus space},
  journaltitle = {Pis'ma v Zhurnal Èksperimental'noĭ i Teoreticheskoĭ Fiziki},
  mrnumber     = {940601},
  msc          = {81E99 (14H15 32G13 81E30)},
  number       = {9},
  pages        = {340--342},
  shortjournal = {Pis'ma Zh. Èksper. Teoret. Fiz.},
  volume       = {46},
}

@Article{Schwarz.1989.fsaums,
  author          = {Schwarz, A. S.},
  date            = {1989},
  file            = {:Nuclear Physics. B/Schwarz - 1989 - fermionic string and universal moduli space.pdf:PDF},
  title           = {fermionic string and universal moduli space},
  doi             = {10.1016/0550-3213(89)90072-2},
  issn            = {0550-3213},
  journalsubtitle = {Theoretical, Phenomenological, and Experimental High Energy Physics. Quantum Field Theory and Statistical Systems},
  journaltitle    = {Nuclear Physics. B},
  mrnumber        = {1001895},
  msc             = {58F07 (14H15 32G13 81E30)},
  number          = {2},
  pages           = {323--343},
  shortjournal    = {Nuclear Phys. B},
  volume          = {317},
}

@Article{Rosly.Schwarz.Voronov.1989.sgast,
  author       = {Rosly, A. A. and Schwarz, A. S. and Voronov, Alexander A.},
  date         = {1989},
  title        = {superconformal geometry and string theory},
  doi          = {10.1007/bf01225506},
  issn         = {0010-3616},
  journaltitle = {Communications in Mathematical Physics},
  mrnumber     = {981212},
  number       = {3},
  pages        = {437--450},
  shortjournal = {Commun. Math. Phys.},
  volume       = {120},
  groups       = {Super},

}

@InCollection{Witten.2015.nohsastmolg,
  author       = {Witten, Edward},
  date         = {2015},
  title        = {notes on holomorphic string and superstring theory measures of low genus},
  booksubtitle = {in honor of Duong H. Phong},
  booktitle    = {analysis, complex geometry, and mathematical physics},
  doi          = {10.1090/conm/644/12786},
  location     = {Providence, RI},
  number       = {644},
  pages        = {307--359},
  publisher    = {Amer. Math. Soc.},
  series       = {Contemporary Mathematics},
  groups       = {Math Phys, Super},
  mrnumber     = {3372474},

}

@Article{Beilinson.Manin.1986.tMfatPmist,
  author       = {Beĭlinson, A. A. and Manin, Yu. I.},
  date         = {1986},
  title        = {the Mumford form and the Polyakov measure in string theory},
  doi          = {10.1007/bf01220994},
  issn         = {0010-3616},
  journaltitle = {Communications in Mathematical Physics},
  mrnumber     = {866195},
  number       = {3},
  pages        = {359--376},
  shortjournal = {Commun. Math. Phys.},
  volume       = {107},
}

@InProceedings{Donagi.Witten.2015.ssinp,
  author    = {Donagi, Ron and Witten, Edward},
  booktitle = {string-math 2012},
  date      = {2015},

  title     = {supermoduli space is not projected},
  doi       = {10.1090/pspum/090/01525},
  location  = {Providence, RI},
  number    = {90},
  pages     = {19--71},
  publisher = {AMS},
  series    = {Proc. Sympos. Pure Math.},
  groups    = {Super},
  keywords  = {14H10 (14H81 81T30 83E30)},
  mrnumber  = {3409787},
  timestamp = {2021-03-11},
}

@Article{Shiota.1986.coJvitose,
  author       = {Shiota, Takahiro},
  date         = {1986},
  title        = {characterization of Jacobian varieties in terms of soliton equations},
  doi          = {10.1007/bf01388967},
  issn         = {0020-9910},
  journaltitle = {Inventiones Mathematicae},
  mrnumber     = {818357},
  number       = {2},
  pages        = {333--382},
  shortjournal = {Invent. Math.},
  volume       = {83},

}

@InCollection{DHoker.Phong.2008.lotls,
  author    = {D'Hoker, Eric and Phong, D. H.},
  booktitle = {Superstring theory},
  publisher = {Int. Press},
  title     = {lectures on two-loop superstrings},
  pages     = {85--123},
  series    = {Adv. Lect. Math.},
  date      = {2008},
  location  = {Somerville, MA},
  mrclass   = {81T30 (11F27 11Z05 81-02)},
  mrnumber  = {2464928},
  number    = {1},
}

@Article{Maxwell.2022.tsMfaSG,
  author       = {Maxwell, Katherine A.},
  date         = {2022},
  title        = {the super Mumford form and Sato Grassmannian},
  doi          = {10.1016/j.geomphys.2022.104604},
 eprinttype     = {arxiv},
  eprint       = {2002.06625},
  journaltitle = {Journal of Geometry and Physics},
  volume       = {180},
  eprinttype   = {arxiv},
}

@Mparticle{DHoker.Phong.2005.amfatsm,
  author      = {D'Hoker, Eric and Phong, D. H.},
  title       = {asyzygies, modular forms, and the superstring measure},
  parts       = {2},
  related     = {DHoker.Phong.2005.amfatsmI,DHoker.Phong.2005.amfatsmII},
  relatedtype = {multipart},
  sortyear    = {2005},
}

@Article{Gomez.Mafra.2013.tcs3laaSd,
  author       = {Gomez, Humberto and Mafra, Carlos R.},
  journaltitle = {Journal of High Energy Physics},
  shortjournal = {J. High Energy Phys.},
  title        = {the closed-string 3-loop amplitude and S-duality},
  number       = {10},
  date         = {2013},
  doi          = {10.1007/jhep10(2013)217},
  eid          = {217},
  issn         = {1126-6708},
  mrclass      = {83C47 (83C30)},
  mrnumber     = {3122561},
}

@Article{Farkas.Grushevsky.SalvatiManni.2021.aesttwSp,
  author       = {Farkas, Hershel M. and Grushevsky, Samuel and Salvati Manni, Riccardo},
  journaltitle = {Algebraic Geometry},
  shortjournal = {Algebr. Geom.},
  title        = {an explicit solution to the weak Schottky problem},
  number       = {3},
  pages        = {358--373},
  volume       = {8},
  date         = {2021},
  doi          = {10.14231/ag-2021-009},
  issn         = {2313-1691},
  mrclass      = {14H42 (14K25 32G15)},
  mrnumber     = {4206440},
  mrreviewer   = {Cherng-tiao Perng},
}

@Article{Belavin.Knizhnik.1986.cgattoqs,
  author       = {Belavin, A. A. and Knizhnik, V. G.},
  date         = {1986},
  title        = {complex geometry and the theory of quantum strings},
  journaltitle = {Akademiya Nauk SSSR. Zhurnal Èksperimental' noĭ i Teoreticheskoĭ Fiziki},
  mrnumber     = {888370},
  number       = {2},
  pages        = {364--390},
  volume       = {91},
}

@Article{Mulase.1984.csiseaJv,
  author       = {Mulase, Motohico},
  date         = {1984},
  title        = {cohomological structure in soliton equations and Jacobian varieties},
  doi          = {10.4310/jdg/1214438685},
  issn         = {0022-040X},
  journaltitle = {Journal of Differential Geometry},
  mrnumber     = {755232},
  number       = {2},
  pages        = {403--430},
  shortjournal = {J. Differ. Geom.},
  volume       = {19},

}

@InCollection{Tsuchiya.Ueno.Yamada.1989.cftoufoscwgs,
  author     = {Tsuchiya, Akihiro and Ueno, Kenji and Yamada, Yasuhiko},
  booktitle  = {Integrable systems in quantum field theory and statistical mechanics},
  publisher  = {Academic Press},
  title      = {conformal field theory on universal family of stable curves with gauge symmetries},
  pages      = {459--566},
  series     = {Adv. Stud. Pure Math.},
  date       = {1989},
  doi        = {10.2969/aspm/01910459},
  location   = {Boston, MA},
  mrclass    = {81T40 (14H15 17B67 17B81 32G15)},
  mrnumber   = {1048605},
  mrreviewer = {Yukihiko Namikawa},
  number     = {19},
}

@Article{Manin.Radul.1985.aseotKPh,
  author       = {Manin, Yu. I. and Radul, A. O.},
  journaltitle = {Communications in Mathematical Physics},
  shortjournal = {Comm. Math. Phys.},
  title        = {a supersymmetric extension of the Kadomtsev-Petviashvili hierarchy},
  number       = {1},
  pages        = {65--77},
  volume       = {98},
  date         = {1985},
  doi          = {10.1007/bf01211044},
  issn         = {0010-3616},
  mrclass      = {58F07 (35Q20 58A50)},
  mrnumber     = {785261},
}

@Article{Rabin.1991.tgotsKf,
  author       = {Rabin, Jeffrey M.},
  date         = {1991},
  title        = {the geometry of the super KP flows},
  doi          = {10.1007/bf02100276},
  issn         = {0010-3616},
  journaltitle = {Communications in Mathematical Physics},
  mrnumber     = {1105429},
  number       = {3},
  pages        = {533--552},
  shortjournal = {Commun. Math. Phys.},
  volume       = {137},

}

@InProceedings{Deligne.1987.lddlc,
  author     = {Deligne, P.},
  booktitle  = {current trends in arithmetical algebraic geometry},
  date       = {1987},
  title      = {le déterminant de la cohomologie},
  doi        = {10.1090/conm/067/902592},
  eventdate  = {1985},
  eventtitle = {a summer research conference},
  language   = {French},
  location   = {{A}rcata, {C}alif.},
  number     = {67},
  pages      = {93--177},
  publisher  = {Amer. Math. Soc.},
  series     = {Contemp. Math.},
  mrnumber   = {902592},
  msc        = {14C40 14F05 14C17},
}

@Article{Friedan.Shenker.1986.tiagoqs,
  author          = {Friedan, Daniel and Shenker, Stephen},
  date            = {1986},
  title           = {the integrable analytic geometry of quantum string},
  doi             = {10.1016/0370-2693(86)90858-0},
  issn            = {0370-2693},
  journalsubtitle = {Particle Physics, Nuclear Physics and Cosmology},
  journaltitle    = {Physics Letters. B},
  mrnumber        = {851960},
  msc             = {81E20 (30F99 32G15 32G99 81E30 81G99 83E99)},
  number          = {3},
  pages           = {287--296},
  shortjournal    = {Phys. Lett. B},
  volume          = {175},
}

@Article{AlvarezGaume.Gomez.Reina.1987.lgGast,
  author       = {Alvarez-Gaumé, L. and Gomez, C. and Reina, C.},
  date         = {1987},
  title        = {loop groups, Grassmannians and string theory},
  doi          = {10.1016/0370-2693(87)90839-2},
  journaltitle = {Physics Letters. B. Particle Physics, Nuclear Physics and Cosmology},
  mrnumber     = {893562},
  msc          = {22E65 (58D30 81E30 81E40)},
  number       = {1-2},
  pages        = {55--62},
  shortjournal = {Phys. Lett. B},
  volume       = {190},
}

@Article{Morozov.1987.statsoums,
  author       = {Morozov, A.},
  date         = {1987},
  subtitle     = {Morozov.1987.stasoums},
  title        = {string theory and the structure of universal module space},
  doi          = {10.1016/0370-2693(87)90741-6},
  journaltitle = {Physics Letters. B. Particle Physics, Nuclear Physics and Cosmology},
  mrnumber     = {909256},
  msc          = {81E99 (81E30)},
  number       = {3},
  pages        = {325--328},
  shortjournal = {Phys. Lett. B},
  volume       = {196},
}

@Article{Vafa.1987.ofoRs,
  author       = {Vafa, Cumrun},
  date         = {1987},
  title        = {operator formulation on Riemann surfaces},
  doi          = {10.1016/0370-2693(87)90838-0},
  journaltitle = {Physics Letters. B. Particle Physics, Nuclear Physics and Cosmology},
  mrnumber     = {893561},
  msc          = {81E99 (14H99 81E40)},
  number       = {1-2},
  pages        = {47--54},
  shortjournal = {Phys. Lett. B},
  volume       = {190},
}

@Article{MunozPorras.PlazaMartin.2008.eoHsitiG,
  author       = {Muñoz Porras, José M. and Plaza Martín, Francisco J.},
  journaltitle = {Mathematische Nachrichten},
  shortjournal = {Math. Nachr.},
  title        = {equations of Hurwitz schemes in the infinite Grassmannian},
  number       = {7},
  pages        = {989--1012},
  volume       = {281},
  date         = {2008},
  doi          = {10.1002/mana.200510655},
  mrclass      = {14H70 (14H30 35Q53 37K10 37K20)},
  mrnumber     = {2431573},
}

\end{document}